\author{K\^ohei Sasaya
	\thanks{Kyoto University, Kyoto 606-8502, Japan. JSPS Research Fellow (DC1). E-mail: \texttt{ksasaya@kurims.kyoto-u.ac.jp}}
}
\title{Systems of Dyadic Cubes of Complete, Doubling, Uniformly Perfect Metric Spaces without Detours}
\newtheorem{lem}{Lemma}[section]
\newtheorem{prop}[lem]{Proposition}
\newtheorem{thm}[lem]{Theorem}
\theoremstyle{definition}
\newtheorem{defi}[lem]{Definition}
\theoremstyle{remark}
\newtheorem*{rem}{Remark}
\newcommand{\ard}{\dim_\mathrm{AR}}
\newcommand{\diam}{\mathrm{diam}}
\newcommand{\ol}[1]{\overline{#1}}
\newcommand{\ul}[1]{\underline{#1}}
\newcommand{\Mc}[1]{\mathcal{#1}}
\newcommand{\Mb}[1]{\mathbb{#1}}
\renewcommand{\Cup}{\bigcup}
\renewcommand{\Cap}{\bigcap}
\newcommand{\sCup}{\bigsqcup}
\newcommand{\nai}{\mathrm{int}}
\newcommand{\doi}[1]{\url{https://doi.org/#1}}
\begin{document}
\maketitle
\begin{abstract}
 Systems of dyadic cubes are the basic tools of harmonic analysis and geometry, and this notion had been extended to general metric spaces. In this paper, we construct systems of dyadic cubes of complete, doubling, uniformly perfect metric spaces, such that for any two points in the metric space, there exists a chain of three cubes whose diameters are comparable to the distance of the points. We also give an application of our construction to previous research of potential analysis and geometry of metric spaces.
\end{abstract}	
\section{Introduction}
The standard system of dyadic cubes of $\Mb{R}^d$, 
\begin{equation}
	S_k=\Bigl\{\prod_{i=1}^d\bigl[\frac{a_i}{2^k},\frac{a_i+1}{2^k}\bigr]\mid a_i\in\Mb{Z}\ (1\le i\le d)\Bigr\} \quad(k\in\Mb{Z}) \label{SD}
\end{equation}
is a basic tool of analysis on the Euclidean spaces. Constructing the counterparts of dyadic cubes of general metric spaces were started by David \cite{Da88,Da91} and Christ \cite{Chr} for metric measure spaces. Hyt\"onen and Kairema \cite{HK12} and Ka\"enm\"aki, Rajala and Suomala \cite{KRS} extended these results for metric spaces without measures. Systems of generalized dyadic cubes were used for various studies of harmonic analysis (e.g. \cite{AH,BFP}) and analysis on metric spaces (e.g. \cite{CJKS,kig}).\\
From the viewpoint of discrete approximation of a metric space, it is important whether the structure of a system of dyadic cubes is comparable to that of the original metric space or not. For example, \eqref{SD} satisfies
\begin{multline*}
	\text{for any $x,y\in\Mb{R}^d$ and $k\in\Mb{Z}$ with $|x-y|_{\Mb{R}^d}\le2^{-k},$ there exist}\\
	\text{$Q_x,Q_y\in S_k$ such that $x\in Q_x, y\in Q_y$ and $Q_x\cap Q_y\ne\emptyset.$ }
\end{multline*}
However, a system of dyadic cubes of a metric space may not satisfy such a condition, for instance, two close points may not have any short chain of dyadic cubes (see Figure \ref{apart}). 
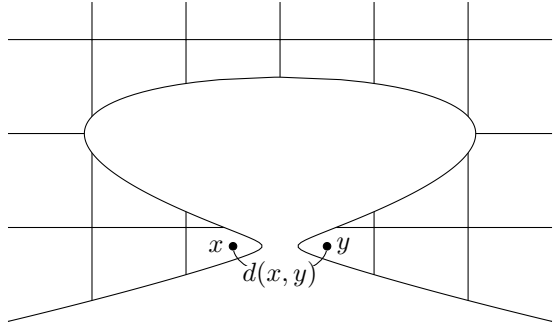
\begin{figure}[tb]
	\centering
	\begin{tikzpicture}[scale=0.25]
		\draw plot[samples=100,domain=-5:8]({-sqrt((-(\x)+8)*((\x)^2+2*(\x)+1.1))},{(\x)})--plot[samples=100,domain=-8:5]({sqrt(((\x)+8)*((\x)^2-2*(\x)+1.1))},{-(\x)});
		\begin{scope}
			\clip
			plot[samples=100,domain=-5:8]({-sqrt((-(\x)+8)*((\x)^2+2*(\x)+1.1))},{(\x)})--plot[samples=100,domain=-8:5]({sqrt(((\x)+8)*((\x)^2-2*(\x)+1.1))},{-(\x)})--({sqrt(209.3)},12)--({-sqrt(209.3)},12)--cycle;
			\draw[step=5](-15,-5) grid (15,12);
		\end{scope}
			\coordinate[label=left :$x$] (a1) at (-2.5,-1) ;
			\filldraw (a1) circle [x radius=0.2, y radius=0.2];
			\coordinate[label=right:$y$] (a2) at (2.5,-1) ;
			\filldraw (a2) circle [x radius=0.2, y radius=0.2];
			\node (a3) at (0,-2.5) {$d(x,y)$} ;
			\begin{scope}
				\clip (-2.5,-1.3)--(-2.5,-2)--(2.5,-2)--(2.5,-1.3)--cycle;
				\draw (0,-1) circle [x radius=2.5, y radius=1.5];
			\end{scope}
	\end{tikzpicture}
	\caption{Two close points without short chains}
	\label{apart}
\end{figure}
Hence it is a natural question when a system of dyadic cubes $S_k\ (k\in\Mb{Z})$ of a metric space $(X,d)$, satisfying the following conditions
for some $C,C',r>0$ and $M\in\Mb{N}$ exist:
\begin{itemize}
	\item $C^{-1}r^k\le\diam (Q,d)\le Cr^k$ for any $Q\in S_k$
	\item 
		for any $x,y\in X$ and $k\in\Mb{Z}$ with $d(x,y)\le C'r^k,$ there exist
		$\{Q_i\}_{i=0}^M\subset S_k$ such that $x\in Q_0,y\in Q_M$ and $Q_{i-1}\cap Q_i\ne\emptyset\ (1\le i\le M).$

\end{itemize}

The aim of this paper is to give an answer to the question in a constructive way, under certain conditions. \\

Let $(X,d)$ be a metric space, $B(x,r)$ be an open ball of radius $r$, and $\ol{A}$ (resp. $\nai A$) be the closure (resp. interior) of a set $A\subset X.$
\begin{defi}[Doubling]
	$(X,d)$ is called doubling if there exists $N\in\Mb{N}$ such that for any $x\in X$ and $r>0,$ there exist $\{x_i\}_{i=1}^N\subset X$ with $B(x,2r) \subset \Cup_{i=0}^N B(x_i,r).$
\end{defi}
\begin{defi}[Uniformly perfect]\label{up}
 $(X,d)$ is called uniformly perfect if there exists $\gamma>1$ such that $B(x,\gamma r)\setminus B(x,r)\ne\emptyset$ whenever $B(x,r)\ne X.$
\end{defi}
Throughout this paper, we assume $(X,d)$ is complete, doubling and uniformly perfect. Our main result is the following.

\begin{thm}\label{Main}
	Let $C_*,c_*$ be constants with $0<c_*<C_*<\infty.$ Then there exist $C_1, C_2, C_3, r_0>0$ such that if a set of points $\cup_{k\in\Mb{Z}}\{x_\omega\}_{\omega\in\Omega_k}$ with some $\Omega_k\ (k\in\Mb{Z})$ satisfies 
	\begin{align}
		d(x_\omega,x_\lambda)\ge c_*r^k &\quad\text{for any $\omega,\lambda$ with $\omega\ne\lambda$} \label{c}\\
		\min_{\omega\in\Omega_k}d (y, x_\omega)<C_*r^k &\quad\text{for any $y\in X$} \label{C}
	\end{align}
for some $r\in(0,r_0],$ then there exist $\{Q_\omega\subset X\mid k\in\Mb{Z}, \omega\in\Omega_k\}$ such that
\begin{spacing}{1.3}
	\vspace{-25pt}
	\begin{align}
		\bullet\ &\Cup_{\omega\in\Omega_k} Q_\omega=X\ \text{for any }k\in\Mb{Z}, \tag{D1}\label{q1} \\
		\bullet\ &\nai Q_\omega=\nai(\ol{Q_\omega}), \quad \ol{\nai Q_\omega}=\ol{Q_\omega}, \tag{D2}\label{q2} \\
		\bullet\ &\text{if $l\ge k,$ then either $Q_\omega\subset Q_\lambda$ or $Q_\omega\cap Q_\lambda=\emptyset$ holds}\notag \\
			&\text{for any $\omega\in\Omega_l,\ \lambda\in\Omega_k,$}\tag{D3}\label{q3}\\
		\bullet\ &B(x_\omega, C_1r^k)\subset Q_\omega\subset B(x_\omega, C_2r^k), \tag{D4}\label{q4}\\
		\bullet\ &\text{if $y,z\in X$ satisfy $d(y,z)\le C_3r^k,$ then there exist $\omega_0,\omega_1,\omega_2\in\Omega_k$}\notag\\
			&\text{such that $\ol{Q_{\omega_i}}\cap\ol{Q_{\omega_{i+1}}}\ne\emptyset\ (i=0,1)$ and $y\in \ol{Q_{\omega_0}}, \ z\in \ol{Q_{\omega_2}}.$
			} \tag{D5}\label{q5}
	\end{align}
\end{spacing}\vspace{-10pt}
\end{thm}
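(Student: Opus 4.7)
The plan is to adapt the Christ--Hyt\"onen--Kairema construction of dyadic cubes from nested separated nets, refining it so that both the regularity (D2) and the no-detour property (D5) hold. First I would turn $\bigsqcup_k\{x_\omega\}_{\omega\in\Omega_k}$ into a tree: for each $\omega\in\Omega_{k+1}$, declare a parent $\pi(\omega)\in\Omega_k$ realising $\min_{\nu\in\Omega_k}d(x_\omega,x_\nu)$, with ties broken by a fixed well-ordering of $\bigsqcup_k\Omega_k$. For $\omega\in\Omega_k$, the cube $Q_\omega$ will be a topologically regularized version of the union of small balls around all descendants of $\omega$ in the tree, with the well-ordering used to assign each boundary point uniquely so as to make the level-$k$ cubes partition $X$. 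By construction this gives (D1), the nestedness (D3), and (D2) (i.e.\ $\nai Q_\omega=\nai\ol{Q_\omega}$ and $\ol{\nai Q_\omega}=\ol{Q_\omega}$).

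For (D4) I would use the standard net bookkeeping. The upper inclusion $Q_\omega\subset B(x_\omega,C_2r^k)$ uses the geometric bound $d(x_\omega,x_\lambda)\le C_*r^{k+1}/(1-r)$ for every descendant $\lambda\in\Omega_l$ with $l>k$ of $\omega$, valid because each parent step drifts by at most $C_*r^j$ by \eqref{C}. The lower inclusion $B(x_\omega,C_1r^k)\subset Q_\omega$ uses the separation \eqref{c}: once $r_0$ is chosen small enough that $C_*r_0/(1-r_0)$ is strictly less than $c_*/2$, any net point sufficiently close to $x_\omega$ must lie in the tree rooted at $\omega$, so a ball $B(x_\omega,C_1r^k)$ is contained in $Q_\omega$. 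Uniform perfectness keeps these balls nonempty, and the doubling condition bounds the branching of the tree throughout.

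The heart of the argument is (D5). I would reduce it to the following \emph{adjacency claim}: there is a constant $D>2C_2$ such that $\ol{Q_\omega}\cap\ol{Q_\lambda}\ne\emptyset$ for any $\omega,\lambda\in\Omega_k$ with $d(x_\omega,x_\lambda)\le Dr^k$. Granted this, (D5) follows with $C_3=D-2C_2$: pick $\omega_0,\omega_2\in\Omega_k$ such that $y\in\ol{Q_{\omega_0}}$ and $z\in\ol{Q_{\omega_2}}$; then $d(x_{\omega_0},x_{\omega_2})\le(2C_2+C_3)r^k=Dr^k$, so setting $\omega_1=\omega_0$ yields the required chain. To prove the adjacency claim I would use uniform perfectness to produce a point $p\in X$ intermediate between $x_\omega$ and $x_\lambda$, approximate $p$ by deep-level net points $z_l\in\Omega_l$, and examine their level-$k$ ancestors. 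The delicate issue, which I expect to be the main obstacle, is that an ancestor might be a third point $\mu\ne\omega,\lambda$. The plan is to combine the doubling condition (which keeps candidate ancestors near $p$ finite), the tie-breaking well-ordering (which forces consistent assignments of descendants near $p$ across deep levels), and a pigeonhole passage to subsequences, so as to produce two subsequences of descendants---one in the tree of $\omega$ and one in that of $\lambda$---both converging to a common limit; completeness then places this limit in $\ol{Q_\omega}\cap\ol{Q_\lambda}$. The calibration of $r_0, C_1, C_2, C_3, D$ and the finite case analysis it requires is where the bulk of the technical work lies.
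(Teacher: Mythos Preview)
Your plan has a real gap at the adjacency claim. With the nearest-neighbor parent map the assertion that $\ol{Q_\omega}\cap\ol{Q_\lambda}\ne\emptyset$ whenever $d(x_\omega,x_\lambda)\le Dr^k$ can fail outright. Take $X=\{0,1\}\times[0,1]\subset\Mb{R}^2$ with the Euclidean metric: it is complete, doubling, and uniformly perfect. At a level $k$ where the net is $\{x_\omega,x_\lambda\}=\{(0,\tfrac12),(1,\tfrac12)\}$, every point of $X$ is strictly closer to one center than the other, so under your nearest-neighbor rule every descendant of $\omega$ stays in $\{0\}\times[0,1]$ and every descendant of $\lambda$ in $\{1\}\times[0,1]$. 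Thus $\ol{Q_\omega}$ and $\ol{Q_\lambda}$ are the two segments, disjoint, while $d(x_\omega,x_\lambda)=1$; since the cube containing $(0,0)$ forces $C_2r^k\ge\tfrac12$, your requirement $D>2C_2$ makes the claim apply and be false. Your pigeonhole plan cannot recover here: whatever intermediate point $p\in X$ you pick, it lies in one segment, all deep net points near $p$ lie in that segment, and their level-$k$ ancestor is always the same center. There is simply no subsequence in the other subtree converging to $p$. Uniform perfectness only supplies points in annuli about $x_\omega$; it does not put any of them under $\lambda$.

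The paper's remedy is not a finer analysis of the nearest-neighbor tree but a \emph{modification of the parent map itself}. At each level $k{+}1$ it isolates net points far from every level-$k$ center and, around each such point, uses uniform perfectness to manufacture for every pair $(n_1,n_2)$ of nearby level-$k$ centers a close pair of level-$(k{+}1)$ net points; it then deliberately assigns one member to $n_1$ and the other to $n_2$, even when neither is the nearest center. This is engineered so that an inheritance condition (the paper's (T4)) holds: whenever $B(x_{k,n_1},\alpha_3r^k)\cap B(x_{k,n_2},\alpha_3r^k)\ne\emptyset$, there exist children with overlapping $\alpha_3r^{k+1}$-balls one level down. Iterating (T4) and invoking completeness then produces a common limit in $K_{k,n_1}\cap K_{k,n_2}$. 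Note also that the adjacency range obtained this way satisfies $2\alpha_3<2C_2$, so the proof of \eqref{q5} genuinely routes through an intermediate cube $K_{k,n_1}$ with $d(y,x_{k,n_1})<C_*r^k$ (with a further condition (T5) handling the case that $y$'s own cube has a distant center), rather than the two-cube shortcut $\omega_1=\omega_0$ you propose. The missing idea is that the tree must be engineered, not merely analysed, to make adjacency propagate down the scales.
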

This theorem is based on the results and proofs of \cite{Chr} and \cite{HK12}, and our main contribution is to develop the proof in order to satisfy the additional condition \eqref{q5}. In \cite{HK12}, Hyt\"onen and Kairema proved that there exist $C>0,\ N\in\Mb{N},\ \{\{\Omega_k^{(i)}\}_{k\in\Mb{Z}}\}_{1\le i\le N}$ and $\{Q_\omega\mid \omega\in\Cup_{k\in\Mb{Z}}\Omega_k^{(i)}\}_{1\le i\le N},$ which satisfy \eqref{q1} to \eqref{q4} for each $i$ and 
\begin{multline}\label{var}
\text{for any $x,y\in X$ and $k\in\Mb{Z}$ with $d(x,y)\le Cr^k,$}\\
\text{there exist $i\le N$ and $\omega\in\Omega_k^{(i)}$ such that $\{x,y\}\subset Q_\omega$ }
\end{multline}
only using the doubling condition, but $i$ in \eqref{var} may be different for each point. \\

In order to state an example of previous results which use a system of dyadic cubes with \eqref{q5}, we prepare some conditions.
\begin{defi}[Ahlfors regular]
	Let $(Y,\rho)$ be a metric space and $\mu$ be a Borel measure on it. We say $\mu$ is $\alpha$-Ahlfors regular with respect to $(Y,\rho)$ if there exists $C>0$ such that 
	\[C^{-1}r^\alpha\le \mu(B_\rho(x,r)) \le Cr^\alpha\hspace{5pt}\text{ for any }x\in Y\text{ and }r_x\le r\le\diam(Y,\rho)\]  
	where $r_x=r_{x,\rho}=\inf_{y\in Y\setminus\{x\}}\rho(x,y).$ $(Y,\rho)$ is called $\alpha$-Ahlfors regular if there exists a Borel measure $\mu$ such that $\mu$ is $\alpha$-Ahlfors regular with respect to $(Y,\rho).$
\end{defi}
Ahlfors regularity is sometimes used in studies of harmonic analysis via dyadic cubes (e.g. \cite{GL}). We also note that if $(Y,\rho)$ is an $\alpha$-Ahlfors regular space without isolated points, then the Hausdorff dimension of $(Y,\rho)$ coincides to $\alpha,$ and $(Y,\rho)$ is uniformly perfect.
\begin{defi}[Quasisymmetry, in the sense of \cite{kig}]
		Let $Y$ be a set and $\rho,\delta$ be distances on $Y.$ We say that $\rho$ is quasisymmetric to $\delta$ if there exists a homeomorphism $\theta:[0,\infty)\to[0,\infty)$ such that for any $x,y,z\in Y$ with $x\ne z,$
	\[ (\delta(x,y)/\delta(x,z))\le\theta(\rho(x,y)/\rho(x,z) ).\]
\end{defi}
\begin{defi}[Ahlfors regular conformal dimension]\label{defARC}
	The Ahlfors regular conformal dimension (ARC dimension in short) of $(Y,\rho)$ is defined by 
	\begin{multline*}
		\ard(Y,\rho)=\inf\{\alpha\mid \text{ there exists a metric $\delta$ on $Y$ such that}\\
		\text{$(Y,\delta)$ is $\alpha$-Ahlfors regular and $\delta$ is quasisymmetric to }\rho\},  
	\end{multline*}
	where $\inf\emptyset=\infty.$
\end{defi}
Quasisymmetry was introduced by Tukia and V\"ais\"ala in~\cite{TV} as a condition of embedding maps between metric spaces, and it is a generalization of quasiconformal mappings on the complex plane.
ARC dimension was implicitly introduced by Bourdon and Pajot \cite{BP}, and named by Bonk and Kleiner \cite{BK}. In \cite{BK}, the ARC dimension is related to Cannon's conjecture, which is famous in the study of hyperbolic groups: it claims that for any hyperbolic group $G$ whose boundary is homeomorphic to $\Mb{S}^2$, there exists a geometric action of $G$ on the hyperbolic space $\Mb{H}^3.$ For compact metric spaces, in \cite{kig} Kigami showed that if the counterpart of a system of dyadic cubes satisfying \eqref{q1} to \eqref{q5}, called ``partition satisfying basic framework'' exists, then $\ard(Y,\rho)$ is characterized by the $p$-energies ($p>0$) on the graphs defined by the system of dyadic cubes (see Theorem \ref{thar}). In Section 3, we introduce the detailed conditions and results of \cite{kig} and its extension to $\sigma$-compact metric spaces and graphs, \cite{sas}. We also state the application of our main result to them in that section.
\section{Remarks and proof of main theorem}
Before the proof of our main theorem, we give some remarks on Theorem \ref{Main}. We first note that the existence of $\{x_\omega\}_{\omega\in\Omega_k}$ with \eqref{c} and \eqref{C} is easily shown as noted in \cite[Subsection 2.12]{HK12}.

\begin{lem}\label{net}
	Let $A=\{x_\omega\}_{\omega\in\Omega^*_k}$ be points with \eqref{c}, then there exists $B=\{x_\omega\}_{\omega\in\Omega_k}$ with \eqref{c}, \eqref{C} and $A\subset B.$
\end{lem}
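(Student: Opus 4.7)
The plan is to obtain $B$ as a \emph{maximal} $c_*r^k$-separated superset of $A$, either by Zorn's Lemma in full generality or, since $(X,d)$ is doubling and hence separable, by a greedy construction along a countable dense subset. Concretely, I would let $\Mc{F}$ be the collection of all subsets $S \subset X$ with $A \subset S$ such that $d(x,y) \ge c_* r^k$ for any distinct $x,y \in S$. Then $\Mc{F}$ is nonempty (it contains $A$) and every chain in $\Mc{F}$ ordered by inclusion has an upper bound (its union, which is again $c_*r^k$-separated since any two points in the union already lie in some member of the chain). So Zorn's Lemma produces a maximal element $B \in \Mc{F}$, which I would index as $B = \{x_\omega\}_{\omega \in \Omega_k}$ extending the original indexing.

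By construction, $B$ satisfies \eqref{c} and contains $A$. To verify \eqref{C}, suppose for contradiction that some $y \in X$ has $d(y,x_\omega) \ge c_* r^k$ for every $\omega \in \Omega_k$. Then $B \cup \{y\}$ is still $c_*r^k$-separated and strictly contains $B$, contradicting maximality. Hence $\inf_{\omega \in \Omega_k} d(y, x_\omega) \le c_* r^k$ for every $y \in X$, and since $c_* < C_*$ by hypothesis, picking any element of $B$ whose distance to $y$ is less than $c_* r^k + (C_* - c_*) r^k = C_* r^k$ gives \eqref{C}.

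If one prefers to avoid Zorn's Lemma, then because $(X,d)$ is doubling it is separable, so fix a countable dense set $\{y_n\}_{n \in \Mb{N}} \subset X$; define inductively $B_0 = A$ and $B_n = B_{n-1} \cup \{y_n\}$ if $d(y_n, x) \ge c_* r^k$ for every $x \in B_{n-1}$, else $B_n = B_{n-1}$, and set $B = \bigcup_n B_n$. Then $B$ is $c_*r^k$-separated by construction, and for arbitrary $y \in X$ and $\varepsilon > 0$ one can pick $y_n$ with $d(y,y_n) < \varepsilon$: either $y_n \in B$ or there is some $x \in B$ within distance $c_* r^k$ of $y_n$, so $d(y,B) \le c_* r^k$ after letting $\varepsilon \to 0$, which again yields \eqref{C} because $C_* > c_*$.

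There is no real obstacle here; the only thing to watch is the strict versus non-strict inequality, which is exactly why the hypothesis $c_* < C_*$ (rather than $c_* \le C_*$) is needed, since maximality of $B$ only gives distance $\le c_* r^k$ and not necessarily $< c_* r^k$ when the infimum fails to be attained.
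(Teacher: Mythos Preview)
Your proof is correct and follows exactly the approach the paper takes: the paper's proof is the single line ``It is shown by the maximal argument with Zorn's lemma,'' and the subsequent remark in the paper likewise notes the separable alternative you describe. One small comment: the contradiction argument actually yields the strict inequality $d(y,x_\omega) < c_*r^k$ for some $\omega$ (not merely $\le$), so your closing caveat about strict versus non-strict inequalities is unnecessary, though harmless.
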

\begin{proof}
	It is shown by the maximal argument with Zorn's lemma.
\end{proof}
\begin{rem}
	We can avoid Zorn's lemma if we know $(X,d)$ is separable. A doubling metric space is separable, but this fact is shown by the existence of points satisfying \eqref{c} and \eqref{C} in general. 
\end{rem}
We also note that the results in \cite{Chr} and \cite{HK12} were stated for quasimetrics, that is, for $(Y,\rho)$ satisfying the axioms of a metric spaces but replacing  the triangle inequality with the following condition:
\[\text{there exists }A\ge1\text{ such that }\rho(x,z)\le A(\rho(x,y)+\rho(y,z))\text{ for any }x,y,z\in Y.\]
It is known that if $\rho$ is a quasimetric, then there exists $c,\beta\in(0,1)$ and a metric $\rho'$ such that $c\rho'(x,y)\le \rho^\beta(x,y)\le c^{-1}\rho'(x,y)$ for any $x,y\in Y$ (see \cite[Proposition 14.5]{Hei}), so we can apply Theorem \ref{Main} to quasimetric spaces with this fact.

\subsection{Proof of Theorem \ref{Main}}
For simplicity of notation, we write $\max\{a,b\}=a\vee b,$ $\min\{a,b\}=a\wedge b$ and $\Omega=\sCup_{k\in\Mb{Z}}\Omega_k.$ We also write $\# A$ for the number of vertices of a set $A.$
We first prove Theorem \ref{Main} under strong conditions.
Since $(X,d)$ is doubling, we may assume $\Omega_k$ in the assumption of Theorem \ref{Main} is countable, so we can write $\Omega_k=\{(k,n)\mid n\in\Mb{N}, n<N_k\}$ for some $N_k\in \Mb{N}\cup\{\infty\}.$
\begin{prop}\label{TtoM}
Let $\alpha_1,\alpha_2,\alpha_3>0$ and $r_0\in(0,1)$ satisfy
\begin{equation}
	\frac{r_0}{1-r_0}\alpha_1<\alpha_2\wedge(\alpha_3-C_*). \label{cond1}
\end{equation}
 We assume that for any $r\in(0,r_0]$ and $A=\cup_{k\in\Mb{Z}}\{x_\omega\}_{\omega\in\Omega_k}$ with \eqref{c} and \eqref{C}, there exists a map $\pi=\pi_A: \Omega\to\Omega$ such that for any $k\in\Mb{Z},$
\begin{align}
	\bullet\ & \pi(\Omega_{k+1})=\Omega_k \text{ and } \sup_{\omega\in\Omega}\#\pi^{-1}(\omega)<\infty, \tag{T1}\label{T1}\\
	\bullet\ & d(x_{k+1,m},x_{\pi(k+1,m)})<\alpha_1 r^k, \tag{T2}\label{T2}\\
	\bullet\ &\text{if } d(x_{k,n},x_{k+1,m})<\alpha_2r^k\text{ then } \pi(k+1,m)=(k,n), \tag{T3}\label{T3}\\
	\bullet\ &\text{if }B(x_{k,n_1}, \alpha_3r^k)\cap B(x_{k,n_2},\alpha_3r^k)\ne\emptyset, \text{ then there exist }m_1, m_2\in\Mb{N}\notag\\
	&\text{such that }B(x_{k+1,m_1}, \alpha_3r^{k+1})\cap B(x_{k+1,m_2},\alpha_3r^{k+1})\ne\emptyset \text{ and }\notag \\
	&\pi(k+1,m_i)=(k,n_i)\ (i=1,2),\tag{T4}\label{T4} \\
	\bullet\ & \text{if }d(x_{k+1,m_1},x_{\pi(k+1,m_1)})\ge C_*r^k\text{ and }d(x_{k+1,m_1},x_{k,n})<\alpha_3r^k, \text{ then}\notag\\
	&\text{there exist } m_2, m_3 \text{ such that } B(x_{k+1,m_2}, \alpha_3r^{k+1})\cap B(x_{k+1,m_3},\alpha_3r^{k+1})\ne\emptyset, \notag \\
	&\pi(k+1,m_1)=\pi(k+1,m_2)\text{ and }(k,n)=\pi{(k+1,m_3)}. \tag{T5}\label{T5}
\end{align}
Then, Theorem \ref{Main} holds.
\end{prop}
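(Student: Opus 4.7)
I would construct the cubes directly from the tree encoded by $\pi$, following the Christ--Hyt\"onen--Kairema blueprint and then upgrading the argument so that the new condition \eqref{q5} also holds. For each $\omega\in\Omega_k$ I introduce the open ``tree interior''
\[
\tilde Q_\omega \;=\; \bigcup_{n\ge 0}\,\bigcup_{\lambda\in\pi^{-n}(\omega)} B(x_\lambda,\alpha_2 r^{k+n}),
\]
and then take $Q_\omega$ to be a half-open modification of $\overline{\tilde Q_\omega}$ chosen (by fixing an enumeration of $\Omega_k$) so that $\{Q_\omega\}_{\omega\in\Omega_k}$ partitions $X$ at each scale.

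\textbf{Verification of \eqref{q1}--\eqref{q4}.} Telescoping (T2) along an ancestor chain gives $d(x_\lambda,x_\omega)<\alpha_1 r^k/(1-r)$ whenever $\lambda\in\pi^{-n}(\omega)$, hence
\[
B(x_\omega,\alpha_2 r^k)\subset\tilde Q_\omega\subset B\!\left(x_\omega,\bigl(\alpha_2+\tfrac{\alpha_1}{1-r}\bigr)r^k\right),
\]
giving \eqref{q4} with $C_1=\alpha_2$, $C_2=\alpha_2+\alpha_1/(1-r)$. Nesting \eqref{q3} is immediate from $\pi$: descendants of $\lambda\in\pi^{-1}(\omega)$ are descendants of $\omega$. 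For same-level disjointness of $\tilde Q_{\omega_1}$ and $\tilde Q_{\omega_2}$, any witness point produces descendants with intersecting small balls; lifting the deeper descendant along $\pi$ and invoking (T3) together with the separation \eqref{c} forces the two ancestor chains to coincide, so $\omega_1=\omega_2$. Covering \eqref{q1} follows from iterating \eqref{C} through the tree, and \eqref{q2} is standard for the closure of an open union of balls in a uniformly perfect space.

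\textbf{Verification of \eqref{q5}.} Given $y,z\in X$ with $d(y,z)\le C_3 r^k$, I pick $\omega_0,\omega_2\in\Omega_k$ so that $y\in\overline{Q_{\omega_0}}$ and $z\in\overline{Q_{\omega_2}}$ (using nearest centres via \eqref{C}, together with $\alpha_2\ge C_*$, which I arrange during the construction). For $C_3<2(\alpha_3-C_*)$ — positive by \eqref{cond1} — the triangle inequality yields $B(x_{\omega_0},\alpha_3 r^k)\cap B(x_{\omega_2},\alpha_3 r^k)\ne\emptyset$. I then iterate (T4) down the tree: at each level $k+n$ I obtain descendants $\omega_0^{(n)},\omega_2^{(n)}$ of $\omega_0,\omega_2$ whose $\alpha_3 r^{k+n}$-balls still meet. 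Whenever the natural iteration would break because a chosen child has its parent at distance $\ge C_* r^{k+n-1}$, (T5) supplies a substitute pair of children that keeps the descent going and, crucially, forces the appearance of a single level-$k$ index $\omega_1$ whose cube closure meets both $\overline{Q_{\omega_0}}$ and $\overline{Q_{\omega_2}}$. Since $(X,d)$ is complete and doubling, hence proper, each $\overline{Q_{\omega_i}}$ is compact; the centres $x_{\omega_0^{(n)}},x_{\omega_2^{(n)}}$ lie within distance $\le 2\alpha_3 r^{k+n}\to 0$, so a convergent subsequence gives either a common limit in $\overline{Q_{\omega_0}}\cap\overline{Q_{\omega_2}}$ (and I set $\omega_1=\omega_0$) or the intermediate cube supplied by (T5).

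\textbf{Main obstacle.} Condition (T5) — absent from the earlier Hyt\"onen--Kairema construction — is exactly what upgrades the chain from ``arbitrarily many cubes'' to ``only three''. The delicate point is to show that every detour of the type illustrated in Figure \ref{apart}, which would otherwise force a longer chain, gets intercepted by (T5) during the descent and produces an $\omega_1$ at the original scale $k$ rather than at a finer scale. Calibrating $\alpha_1,\alpha_2,\alpha_3,C_*$ and $r_0$ so that the same-level disjointness in \eqref{q3} and the descent-plus-compactness argument for \eqref{q5} go through simultaneously is precisely what \eqref{cond1} is for.
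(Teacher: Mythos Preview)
Your construction of $\tilde Q_\omega$ and the outline for \eqref{q1}--\eqref{q4} are in the same spirit as the paper's, which works instead with $K_\omega=\overline{\{x_\lambda:\lambda\in\pi^{-n}(\omega),\ n\ge0\}}$ and proves three preparatory lemmas (an inner ball of radius $\alpha_5=\alpha_2-\alpha_1 r_0/(1-r_0)$, covering, and $\overline{\mathrm{int}\,K_\omega}=K_\omega$). Your disjointness sketch is slightly loose---telescoping plus \eqref{T3} only yields the inner radius $\alpha_5$, not $\alpha_2$---but this is easily repaired along the paper's lines.

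The genuine gap is in \eqref{q5}. You write ``together with $\alpha_2\ge C_*$, which I arrange during the construction'', but $\alpha_2$ is a \emph{given} hypothesis of the proposition, not a free parameter; and when $\pi$ is actually produced in Proposition~\ref{Tree} one is forced to take $(1+\gamma)\alpha_2<c_*<C_*$, so $\alpha_2<C_*$. Hence you cannot assume that the cube $Q_{\omega_0}$ containing $y$ has its centre within $C_*r^k$ of $y$: that centre may sit as far as $\alpha_4 r^k=\alpha_1 r^k/(1-r_0)$ away, and then $B(x_{\omega_0},\alpha_3 r^k)\cap B(x_{\omega_2},\alpha_3 r^k)$ can be empty, so your \eqref{T4}-descent never starts. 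This is exactly the detour phenomenon the proposition must handle, and it cannot be removed by a choice of constants.

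The paper's remedy is to take $\omega_1$ to be a \emph{nearest-centre} index to $y$ (so $d(y,x_{\omega_1})<C_*r^k$), generally different from the index $\omega_0$ of the cube actually containing $y$, and to show separately that $\overline{Q_{\omega_0}}\cap\overline{Q_{\omega_1}}\ne\emptyset$ and $\overline{Q_{\omega_1}}\cap\overline{Q_{\omega_2}}\ne\emptyset$. For each pair, either the two $\alpha_3 r^k$-balls already overlap and the \eqref{T4}-descent/Cauchy-sequence argument applies directly, or one passes a \emph{single} level down to the child $(k+1,m)$ of the far cube that contains the relevant point, checks that $d(x_{k+1,m},x_{\pi(k+1,m)})\ge C_*r^k$, and applies \eqref{T5} \emph{once} to obtain children of the two level-$k$ cubes whose $\alpha_3 r^{k+1}$-balls meet. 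Thus \eqref{T5} is not a repair mechanism inside the \eqref{T4} iteration as you describe; it is invoked once at level $k+1$, before any descent, to bridge the containing cube to the nearest-centre cube---and this is precisely why the chain has length three rather than two.
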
 
In order to prove this proposition, we prepare some technical lemmas. We let $K_\omega=\ol{\Cup_{l\ge0}\{x_\lambda\mid\lambda\in\pi^{-l}(\omega)\}}$ for any $\omega\in\Omega.$
\begin{lem}\label{hage}
	Let $\alpha_4=\frac{\alpha_1}{1-r_0}$ and $\alpha_5=\alpha_2-\frac{\alpha_1r_0}{1-r_0}.$
	\begin{enumerate}
	\item $K_{k,n}\subset \ol{B(x_{k,n},\alpha_4r^k)}$ for any $(k,n).$
	\item $\Cup_{\omega\in\Omega_k}K_\omega=X$ for any $k.$
	\item $B(x_{k,n},\alpha_5r^k)\subset K_{k,n}$ for any $(k,n).$
\end{enumerate}
\end{lem}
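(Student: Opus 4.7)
All three parts unwind to bookkeeping with the iterated parent map $\pi$, telescoping the one-step bound of \eqref{T2}: for any $\omega\in\Omega_k$ and any descendant $\lambda\in\pi^{-l}(\omega)$,
\[d(x_\lambda,x_\omega)\le\sum_{j=0}^{l-1}\alpha_1r^{k+j}<\frac{\alpha_1r^k}{1-r}\le\frac{\alpha_1 r^k}{1-r_0}=\alpha_4r^k.\]
Throughout I will combine this with \eqref{c}--\eqref{C} and the two defining algebraic identities $\alpha_4=\alpha_1/(1-r_0)$ and $\alpha_5+\alpha_4r_0=\alpha_2$.

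Part (1) is the routine warm-up: unfold $K_{k,n}$ as the closure of $\bigcup_{l\ge 0}\{x_\lambda:\lambda\in\pi^{-l}(k,n)\}$ and apply the telescoping bound above, then take closure.

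Part (2) is a pigeonhole argument. Given $y\in X$, I would use \eqref{C} to pick $(l,m_l)$ with $d(y,x_{l,m_l})<C_*r^l$ for every $l\ge k$, then trace each one up to its level-$k$ ancestor $(k,n_l):=\pi^{l-k}(l,m_l)$. Part (1) forces every $x_{k,n_l}$ into the ball $\ol{B(y,(\alpha_4+C_*)r^k)}$, which by \eqref{c} together with doubling of $(X,d)$ contains only finitely many centers $x_{k,n}$. Hence some $n$ appears infinitely often as $n_l$, and $x_{l,m_l}\to y$ along that subsequence puts $y\in K_{k,n}$.

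Part (3) is the main obstacle, because the estimate must fit under the threshold in \eqref{T3} with essentially zero slack. Given $y\in B(x_{k,n},\alpha_5 r^k)$, I would set $\epsilon:=\alpha_5 r^k-d(y,x_{k,n})>0$, choose $l$ so large that $C_*r^l<\epsilon$, pick $(l,m_l)$ via \eqref{C}, and look at the level-$(k{+}1)$ ancestor $(k+1,m'):=\pi^{l-k-1}(l,m_l)$. Chaining \eqref{T2} yields $d(x_{l,m_l},x_{k+1,m'})<\alpha_1 r^{k+1}/(1-r)\le\alpha_4 r_0\,r^k$, so the triangle inequality gives
\[d(x_{k+1,m'},x_{k,n})<\alpha_4 r_0 r^k+C_*r^l+(\alpha_5 r^k-\epsilon)<(\alpha_4 r_0+\alpha_5)r^k=\alpha_2 r^k.\]
This is exactly the hypothesis of \eqref{T3}, which forces $\pi(k+1,m')=(k,n)$; thus $(l,m_l)$ is a descendant of $(k,n)$ and $x_{l,m_l}\to y$ gives $y\in K_{k,n}$. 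The identity $\alpha_5+\alpha_4r_0=\alpha_2$ built into the definitions, the monotonicity $r/(1-r)\le r_0/(1-r_0)$, and the strict inequality $d(y,x_{k,n})<\alpha_5r^k$ are the three pieces that make the estimate just barely fit; this is the delicate step I expect to spend the most care on.
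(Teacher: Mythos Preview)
Your argument is correct and follows the same core strategy as the paper: telescope \eqref{T2} to get the $\alpha_4 r^k$ bound for (1), use finiteness of level-$k$ centers near $y$ (from doubling and \eqref{c}) for (2), and feed the chain estimate into \eqref{T3} via the identity $\alpha_5+\alpha_4 r_0=\alpha_2$ for (3). The only packaging differences are that the paper proves (2) by showing $\bigcup_{\omega\in\Omega_k}K_\omega$ is closed and contains the dense set $\{x_\lambda:\lambda\in\bigcup_{l\ge k}\Omega_l\}$, rather than your pigeonhole on ancestors, and for (3) it first invokes (2) to place $y$ in some $K_{l,m}\subset B(x_{k,n},\alpha_5 r^k)$ (so that $y\in K_{l,m}\subset K_{k,n}$ once $\pi^{l-k}(l,m)=(k,n)$ is established) instead of building a sequence via \eqref{C}; these are minor variations on the same idea.
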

\begin{proof}
	\begin{enumerate}
		\item If $\lambda\in\pi^{-l}(\omega)$ for $l\ge1,$ then $d(x_\omega, x_\lambda)\le\sum_{j=0}^{l-1}d(x_{\pi^j(\lambda)},x_{\pi^{j+1}(\lambda)}) < \frac{\alpha_1}{1-r}r^k\le\alpha_4r^k$ by \eqref{T1} and \eqref{T2}. This suffices to show (1).
		\item Let $y\in\ol{\Cup_{\omega\in\Omega_k}K_\omega}.$ Since $(X,d)$ is doubling and $\{x_\omega\}_{\omega\in\Omega}$ satisfy \eqref{c} and \eqref{C}, $\#\{\omega\in\Omega_k\mid d(y,x_\omega)\le\alpha_4r^k+1\}<\infty.$ This with (1) shows $\#\{\omega\in\Omega_k\mid B(y,1)\cap K_\omega\ne\emptyset\}<\infty$ and so $y\in\Cup_{\omega\in\Omega_k}K_\omega.$ Therefore $\cup_{\omega\in\Omega_k}K_\omega = \ol{\Cup_{\omega\in\Omega_k}K_\omega} \supset \ol{\Cup_{l\ge k}\cup_{\omega\in\Omega_l}x_\omega}=X  $ by \eqref{C} and \eqref{T1}.
		\item Let $y\in B(x_{k,n},\alpha_5r^k).$ By (1) and (2), there exists $(l,m)\in\Omega$ such that $l>k$ and $y\in K_{l,m}\subset B(x_{k,n}, \alpha_5r^k).$ Then
		\[d(x_{\pi^{l-k-1}(l,m)},x_{n,k})\le d(x_{n,k},x_{l,m})+d(x_{l,m},x_{\pi^{l-k-1}(l,m)})<(\alpha_5+\frac{r\alpha_1}{1-r})r^k\]
		similarly to (1). Therefore $\pi^{l-k}(l,m)=(n,k)$ by \eqref{T3} and $y\in  K_{k,n}.$
	\end{enumerate}
\end{proof}
\begin{lem}\label{int}
	Let $O_{k,n}=X\setminus\cup_{o\ne n}K_{k,o},$ then $O_{k,n}=\nai K_{k,n}$ and $\ol{O_{k,n}}=K_{k,n}.$
\end{lem}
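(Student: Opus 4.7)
The plan is to isolate a single auxiliary fact about the tree and derive both identities from it. The auxiliary claim I want is: for every $l \ge 0$ and every $\lambda \in \pi^{-l}(k,n)$, the point $x_\lambda$ lies outside $K_{k,o}$ for all $o \ne n$; equivalently, $\Cup_{l \ge 0}\{x_\mu \mid \mu \in \pi^{-l}(k,n)\} \subset O_{k,n}$. Everything in the lemma will then follow by short contradiction arguments.

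To prove the auxiliary claim I argue by contradiction: if $x_\lambda \in K_{k,o}$ for some $o \ne n$, pick $x_{\mu_i} \to x_\lambda$ with $\mu_i \in \pi^{-m_i}(k,o)$. The separation \eqref{c} together with doubling forces $m_i \to \infty$ along a subsequence, since at any fixed level the descendants of $(k,o)$ are $c_* r^{k+m}$-separated and locally finite, hence cannot accumulate at $x_\lambda$. For $i$ large enough that $m_i \ge l+1$ and $d(x_{\mu_i},x_\lambda)$ is tiny, apply the chain estimate from Lemma \ref{hage}(1) to the ancestor $\nu_i := \pi^{m_i-l-1}(\mu_i)$, which sits at level $k+l+1$: this gives $d(x_{\nu_i},x_{\mu_i}) < \alpha_4 r^{k+l+1}$, and the triangle inequality yields $d(x_{\nu_i},x_\lambda) < \alpha_2 r^{k+l}$, using $r\alpha_4 \le r_0\alpha_4 < \alpha_2$ from \eqref{cond1}. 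Then \eqref{T3} forces $\pi(\nu_i) = \lambda$, so $\pi^{m_i}(\mu_i) = \pi^l(\lambda)$, which reads $(k,o)=(k,n)$ — contradiction.

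Given the auxiliary claim, the two identities fall out. First, $O_{k,n}$ is open, because Lemma \ref{hage}(1) confines $K_{k,o}$ to $\ol{B(x_{k,o},\alpha_4 r^k)}$ and the $c_* r^k$-separated centers together with doubling make $\{K_{k,o}\}_o$ locally finite, so $\Cup_{o\ne n}K_{k,o}$ is closed. Lemma \ref{hage}(2) gives $O_{k,n}\subset K_{k,n}$ and hence $O_{k,n}\subset \nai K_{k,n}$. Combining the auxiliary claim with the definition $K_{k,n}=\ol{\Cup_{l}\{x_\mu:\mu\in\pi^{-l}(k,n)\}}$ yields $K_{k,n}\subset\ol{O_{k,n}}\subset K_{k,n}$, hence $\ol{O_{k,n}}=K_{k,n}$.

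The remaining inclusion $\nai K_{k,n}\subset O_{k,n}$ is handled by another contradiction: if some $y\in\nai K_{k,n}$ were in $K_{k,o}$ with $o\ne n$, approximate $y$ by descendants of $(k,o)$; for large index they sit in the open neighborhood of $y$ contained in $K_{k,n}$, yet by the auxiliary claim (applied with $o$ in place of $n$) these descendants are disjoint from $K_{k,n}$. The main obstacle will be the auxiliary claim itself — specifically, the bookkeeping needed to arrange $m_i > l$ via the separation-plus-doubling argument, and the delicate balancing of the descent bound $\alpha_4 r^{k+l+1}$ against the \eqref{T3} threshold $\alpha_2 r^{k+l}$ that is made possible exactly by the size condition \eqref{cond1}.
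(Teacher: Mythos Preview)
Your proof is correct and essentially identical to the paper's. Your auxiliary claim is the contrapositive of the paper's key fact \eqref{ptin} (``$x_{l,m}\in K_{k,n}\Rightarrow\pi^{l-k}(l,m)=(k,n)$''), and both are proved by the same mechanism: approximate by deep descendants, climb back up to level $k+l+1$, and invoke \eqref{T3} via the inequality $r_0\alpha_4<\alpha_2$ (the paper packages this as $\alpha_5>0$ and refers back to the computation in Lemma~\ref{hage}(3)); the deductions of $\ol{O_{k,n}}=K_{k,n}$, $\nai K_{k,n}\subset O_{k,n}$, and the openness of $O_{k,n}$ then match the paper's line for line.
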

\begin{proof}
	Note that $O_{k,n}=K_{k,n}\setminus\cup_{o\ne n}K_{k,o}$ by Lemma \ref{hage} (3). We first show that
	\begin{equation}
		x_{l,m}\in K_{k,n} \text{ for some } m\in\Mb{N} \text{ and }l\ge k, \text{ then }\pi^{l-k}(l,m)=(k,n).\label{ptin}
	\end{equation}
Indeed, we can take $(l',m')$ for sufficiently large $l'$ with $d(x_{l,m},x_{l',m'})<\alpha_5r^l$ and $\pi^{l'-k}(l',m')=(k,n).$ Therefore in the similar way to Lemma \ref{hage} (3), we obtain $\pi^{l'-l}(l',m')=(l,m)$ and $\pi^{l-k}(l,m)=(k,n).$\par
This shows $\Cup_{l\ge0}\{x_\lambda\mid\lambda\in\pi^{-l}(\omega)\}\subset O_\omega$ and so $\ol{O_\omega}=K_\omega$ by definition. Next we show $\nai K_\omega=O_\omega.$ Let $y\in\nai K_{k,n}\cap K_{k,o}.$ By definition of $K_{k,o},$ there exists $(l,m)$ such that $\pi^{l-k}(l,m)=(k,o)$ and $x_{l,m}\in K_{k,n}.$ This with \eqref{ptin} shows $n=o$ and $\nai K_{k,n}\subset O_{k,n}.$ On the other hand, by Lemma \ref{hage} (2) and $\#\{o\mid B(x_{k,n},\alpha_4r^k+1)\cap B(x_{k,o},\alpha_4r^k)\}<\infty,$ it follows that $O_{k,n}=$ $ \{y\in X\mid d(y, K_{k,n})<1\}\setminus\Cup_{o\ne n}K_{k,o}$ is open, and so $O_{k,n}\subset\nai K_{k,n}.$
\end{proof}
\begin{lem}\label{kiss}
	If $B(x_{k,n},\alpha_3r^k)\cap B(x_{k,o},\alpha_3r^k)\ne\emptyset,$ then $K_{k,n}\cap K_{k,o}\ne\emptyset.$
\end{lem}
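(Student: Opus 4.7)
The plan is to unfold the hypothesis using condition \eqref{T4} repeatedly to push the two intersecting balls down through all finer levels, and then pass to the limit using completeness of $(X,d)$. Starting from the base case $(m_0^{(1)}, m_0^{(2)}) = (n, o)$, I would construct by induction on $l \ge 0$ pairs $(k+l, m_l^{(1)}), (k+l, m_l^{(2)}) \in \Omega_{k+l}$ satisfying
\[
B(x_{k+l, m_l^{(1)}}, \alpha_3 r^{k+l}) \cap B(x_{k+l, m_l^{(2)}}, \alpha_3 r^{k+l}) \neq \emptyset
\quad\text{and}\quad
\pi(k+l+1, m_{l+1}^{(i)}) = (k+l, m_l^{(i)})\ (i=1,2).
\]
The inductive step is exactly what \eqref{T4} provides: given the pair at level $k+l$, it produces a pair at level $k+l+1$ whose balls intersect and whose $\pi$-images are the previous pair. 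In particular $\pi^{l}(k+l, m_l^{(i)})$ equals $(k,n)$ for $i=1$ and $(k,o)$ for $i=2$, so by definition of $K_\omega$ the points $y_l := x_{k+l, m_l^{(1)}}$ and $z_l := x_{k+l, m_l^{(2)}}$ belong to $K_{k,n}$ and $K_{k,o}$ respectively.

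Next I would show $\{y_l\}$ and $\{z_l\}$ are Cauchy. By \eqref{T2} applied to $\pi(k+l+1, m_{l+1}^{(1)}) = (k+l, m_l^{(1)})$ we have $d(y_{l+1}, y_l) < \alpha_1 r^{k+l}$, so a geometric tail estimate gives $d(y_l, y_{l+j}) < \alpha_4 r^{k+l}$ for all $j \ge 0$, exactly as in the proof of Lemma \ref{hage}(1). Since $(X,d)$ is complete, $y_l \to y^* \in X$; the same argument gives $z_l \to z^*$. The intersection condition on balls at level $k+l$ forces $d(y_l, z_l) < 2\alpha_3 r^{k+l} \to 0$, hence $y^* = z^*$.

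Finally, $K_{k,n}$ and $K_{k,o}$ are closed by definition, so $y^* \in K_{k,n}$ and $z^* \in K_{k,o}$; since $y^* = z^*$, this gives $K_{k,n} \cap K_{k,o} \neq \emptyset$, proving the lemma. The only non-routine step is verifying that the inductive construction based on \eqref{T4} can be carried out coherently for both coordinates simultaneously at every level, but this is immediate since \eqref{T4} produces both children of the pair at once; all other steps are elementary Cauchy-limit arguments already implicit in Lemma \ref{hage}.
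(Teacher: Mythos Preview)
Your proof is correct and follows essentially the same approach as the paper's: iterate \eqref{T4} to build Cauchy sequences in $K_{k,n}$ and $K_{k,o}$ whose mutual distance tends to zero, then use completeness and closedness to conclude they share a common limit. Your write-up is simply more detailed than the paper's three-sentence version.
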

\begin{proof}
	By \eqref{T4}, there exist $n_l, o_l$ for $l\ge k$ such that $\pi^{l-k}(l,n_l) = (k,n),$ $\pi^{l-k}(l,o_l) = (k,o)$ and $d(x_{l,n_l},x_{l,o_l})<2\alpha_3r^l.$ Since both $K_{k,n}$ and $K_{k,o}$ are complete, and both $\{x_{l,n_l}\}_{l\ge k}$ and $\{x_{l,o_l}\}_{l\ge k}$ are Cauchy sequences, these sequences converge to the same limit on $K_{k,o}\cap K_{k,n}.$
\end{proof}
\begin{proof}[Proof of Proposition \ref{TtoM}]
We inductively define $Q_{k,n}.$
\begin{itemize}
	\item For $(0,n)\in\Omega_0,$ we define $Q_{0,1}=K_{0,1}$ and $Q_{0,n}=K_{0,n}\setminus\Cup_{i=1}^{n-1}Q_{0,i}.$ 
	\item Let $(k,n)\in\Cup_{l>0}\Omega_l.$ If $n=\min\{o\mid\pi(k,n)=\pi(k,o)\}$, then we define $Q_{k,n}=(Q_{\pi(k,n)}\cap K_{k,n}).$ Otherwise,
	\[Q_{k,n}:=
		(Q_{\pi(k,n)}\cap K_{k,n})\setminus\cup \{Q_{k,o}\mid o<n \text{ and }\pi(k,o)=\pi(k,n) \}. \]
	\item For $(k,n)\in\Cup_{l<0}\Omega_l,$ let $Q_{k,n}=\Cup_{\omega\in\pi^k(k,n)}Q_\omega.$
\end{itemize}
Since $x_{k,n}\in O_{k,n}$ and $x_{k,n}\in K_{k+1,m}$ for some $m,$ $x_{k,n}\in\Cup_{\omega\in\pi^{-1}(k,n)}K_\omega.$ This with $\#(\pi^{-1}(k,w))<\infty$ shows $K_{k,n}=\Cup_{\omega\in\pi^{-1}(k,n)}K_\omega.$  Therefore $\eqref{q1}$ and $\eqref{q3}$ follow. Moreover,
\begin{itemize}
	\item since $O_\omega\subset O_{\pi(\omega)}$ by $K_\omega=\Cup_{\lambda\in\pi^{-1}(\omega)}K_\lambda,$ it inductively follows that for any $k\ge0$ and $n\in\Mb{N},$ it holds that $O_{k,n}\subset Q_{k,n}\subset K_{k,n}.$
	\item Let $y\in O_{k,n}$ for some $(k,n)\in\Cup_{l<0}\Omega_l.$ By definition of $Q_{k,n},$ there exists $o\in\Mb{N}$ such that $ y\in Q_{0,o}\subset K_{0,o}.$ This with $y\in O_{k,n}$ assures $\pi^{-k}(0,o)=(k,n)$ and $O_{k,n}\subset Q_{k,n}\subset K_{k,n}.$
\end{itemize}
Therefore Lemma \ref{int} leads to \eqref{q2}. The condition \eqref{q4} immediately follows from Lemma \ref{hage}. \\
Let $y,z\in X$ with $d(y,z)<(\alpha_3-C_*-\alpha_4r_0)r^k.$ By \eqref{C} and \eqref{q1}, there exist $n_0, n_1, n_2$ such that $d(y, x_{k,n_1})<C_*r^k,$ $y\in K_{k,n_0},$ and $z\in K_{k,n_2}.$ Then,
\begin{itemize}
	\item If $d(y, x_{k,n_2})<\alpha_3r^k,$ then $K_{k,n_1}\cap K_{k,n_2}\ne\emptyset$ by Lemma \ref{kiss}.
	\item Otherwise, for $(k+1,m_1)\in\pi^{-1}(k,n_2)$ with $z\in K_{k+1,m_1},$
	\begin{align*}
		d(x_{k,n_2},x_{k+1,m_1})&\ge d(y,x_{k,n_2})-d(y,z)-d(z,x_{k+1,m_1})\\
		&> (\alpha_3-(\alpha_3-C_*-\alpha_4r_0)-\alpha_4r)r^k\ge C_*r^k\\
\shortintertext{and}
	d(x_{k,n_1},x_{k+1,m_1})&\le d(y,x_{k,n_1})+d(y,z)+d(z,x_{k+1,m_1})\\
	&< (C_*+(\alpha_3-C_*-\alpha_4r_0)+\alpha_4r)r^k\le \alpha_3r^k.
\end{align*}
Therefore by \eqref{T5}, there exist $m_2,m_3$ such that $\pi(k+1,m_2)=(k,n_2)$, $\pi(k+1,m_3)=(k,n_1)$ and $B(x_{k+1,m_2},\alpha_3r^k)\cap B(x_{k+1,m_3},\alpha_3r^k)\ne\emptyset.$ This shows $K_{k,n_1}\cap K_{k,n_2} \supset K_{k+1,m_3}\cap K_{k+1,m_2} \ne\emptyset. $
\end{itemize}
Similarly, if $d(y, x_{k,n_0})\ge \alpha_3 r^k,$ then there exists $m_4$ such that $\pi(k+1,m_4)=(k,n_0),$ $y\in K_{k+1,m_4},$ $d(x_{k,n_0},x_{k+1,m_4})>C_*r^k$ and $d(x_{k,n_1},x_{k+1,m_4})<\alpha_3r^k.$ Therefore $K_{k,n_0}\cap K_{k,n_1}\ne\emptyset$ regardless of whether $d(y, x_{k,n_0})< \alpha_3 r^k$ or not, and \eqref{q5} follows.

\end{proof}
We next prove that the assumptions of Proposition \ref{TtoM} actually hold under the assumption of Theorem \ref{Main}. Let $\gamma>1$ be the constant in Definition \ref{up}, in other words, for any $y\in X$ and $r>0$ with $B(y,r)\ne X$ satisfies $B(x,\gamma r)\setminus B(x,r)\ne\emptyset.$ 

\begin{prop}\label{Tree}
	For any  $\alpha_1,\alpha_3$ with
	\begin{gather}
		\alpha_1>\alpha_3>(C_*\vee1)\gamma,\label{TC1}\\
		\intertext{let $\alpha_2>0$ be sufficiently small such that}
		(1+\gamma)\alpha_2<(\alpha_1-\alpha_3)\wedge c_* \label{TC2}\\
		\intertext{and $r_0\in(0,1)$ be also sufficiently small such that \eqref{cond1} and the following hold:}
		r_0C_*<\alpha_2, \label{TC3}\\
		(1+\gamma)(\alpha_2+\alpha_6r_0+C_*r_0)<c_*, \label{TC4}\\
		(1+\gamma)\alpha_2+(4+\gamma)\alpha_6r_0+(2+\gamma)C_*r_0<\alpha_1-\alpha_3\label{TC5}\\
		\shortintertext{where}
		\begin{aligned}
			N=\sup_{y\in X, s>0}\sup\{\# A\mid A\subset B(y,\alpha_1s),&\ d(z_1,z_2)\ge c_*s\\ &\text{ for any }z_1,z_2\in A\text{ with }z_1\ne z_2\},
		\end{aligned}\notag \\
		\shortintertext{and}
		\alpha_6=C_*\frac{(1+\gamma)(2+\gamma)}{\gamma-1}\bigl(\gamma^{\frac{N(N-1)}{2}}-1\bigr).\notag
	\end{gather}
Then, for any $r\in(0,r_0]$ and $A=\cup_{k\in\Mb{Z}}\{x_\omega\}_{\omega\in\Omega_k}$ with \eqref{c} and \eqref{C}, there exists a map $\pi=\pi_A: \Omega\to\Omega$ satisfying conditions \eqref{T1} to \eqref{T5}.
\end{prop}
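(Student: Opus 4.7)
The plan is to build $\pi$ by a greedy closest-parent rule and then leverage uniform perfectness to produce the adjacencies required by \eqref{T4} and \eqref{T5}. For each $(k+1,m)$, if there exists $(k,n)$ with $d(x_{k+1,m},x_{k,n})<\alpha_2 r^k$, then such $(k,n)$ is unique because $2\alpha_2<c_*$ (which follows from \eqref{TC2} since $\gamma>1$), and I set $\pi(k+1,m)=(k,n)$; otherwise I use \eqref{C} to pick any $(k,n)$ with $d(x_{k+1,m},x_{k,n})<C_*r^k$. By construction $\pi$ satisfies \eqref{T3}, and \eqref{T2} follows from $C_*<\alpha_3<\alpha_1$.

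For \eqref{T1}, given $(k,n)$, applying \eqref{C} at level $k+1$ to $x_{k,n}$ produces $(k+1,m)$ with $d(x_{k,n},x_{k+1,m})<C_*r^{k+1}<\alpha_2 r^k$ by \eqref{TC3}, so the greedy rule forces $\pi(k+1,m)=(k,n)$, giving surjectivity. The fiber $\pi^{-1}(k,n)$ is contained in $B(x_{k,n},C_*r^k)$ and is $c_*r^{k+1}$-separated, so its cardinality is bounded by the doubling constant; this bound is exactly the $N$ appearing in the statement.

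The substance of the proof is \eqref{T4} and \eqref{T5}. For \eqref{T4}, suppose $d(x_{k,n_1},x_{k,n_2})<2\alpha_3 r^k$. My plan is to use uniform perfectness in Definition \ref{up} iteratively: starting from a point approximately at scale $r^{k+1}$ from $x_{k,n_1}$, the annulus property yields points at successive radii $r^{k+1},\gamma r^{k+1},\gamma^2 r^{k+1},\ldots$, each approximated at level $k+1$ via \eqref{C}. Walking through these candidates and swapping the $\pi$-assignment as needed, at most $N(N-1)/2$ steps (one per unordered pair of candidate siblings) suffice to produce the required $m_1,m_2$. Each step inflates the relevant radius by a factor $\gamma$ and costs an additive $C_*(1+\gamma)(2+\gamma)r^k$ of displacement budget, giving the geometric sum that defines $\alpha_6$. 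For \eqref{T5}, the same chain construction seeded at $x_{k+1,m_1}$ produces $m_2,m_3$.

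The main obstacle is verifying that the uniform-perfectness iteration produces points whose \emph{greedy} $\pi$-values are the intended $(k,n_i)$. At each step one must show that competing parents remain strictly further than $\alpha_2 r^k$ from the newly produced child (so the uniqueness behind the greedy rule protects the intended assignment) yet that the intended parent stays within $C_*r^k$ (so $\pi$ remains well-defined under the greedy rule). The exact role of \eqref{TC4} and \eqref{TC5} is to force this dichotomy after the accumulated displacement $\alpha_6 r^k$; in particular \eqref{TC4} rules out spurious close competitors, and \eqref{TC5} keeps the total displacement inside the $\alpha_1 r^k$-neighborhood required for \eqref{T2}. Maintaining this bookkeeping simultaneously over all pairs of close parents and all levels $k$ is the bulk of the technical work.
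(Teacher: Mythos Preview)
Your greedy rule cannot deliver \eqref{T4}. Under your definition every child satisfies $d(x_{k+1,m},x_{\pi(k+1,m)})<C_*r^k$, so the triangle inequality forces
\[
d(x_{k,n_1},x_{k,n_2})\le d(x_{k,n_1},x_{k+1,m_1})+d(x_{k+1,m_1},x_{k+1,m_2})+d(x_{k+1,m_2},x_{k,n_2})<2C_*r^k+2\alpha_3 r^{k+1}
\]
for any candidate pair $(m_1,m_2)$ satisfying the conclusion of \eqref{T4}. But \eqref{TC1} gives $\alpha_3>C_*\gamma>C_*$, so for small $r$ the hypothesis $d(x_{k,n_1},x_{k,n_2})<2\alpha_3 r^k$ of \eqref{T4} allows distances strictly larger than $2C_*r^k+2\alpha_3 r^{k+1}$; for such pairs no greedily-assigned children can work, regardless of what uniform perfectness produces. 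Your last paragraph asks that ``the intended parent stays within $C_*r^k$'', but this is exactly what fails. Relatedly, your rule makes the hypothesis of \eqref{T5} vacuous, whereas in the paper \eqref{T5} is genuinely needed because some children are deliberately assigned to parents at distance up to $\alpha_1 r^k$.

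The paper does not use a closest-parent rule. It first isolates anchor points $x_{k+1,f_{k+1}(i)}$ that lie outside $\bigcup_{\omega\in\Omega_k}B(x_\omega,(\alpha_2+\alpha_6 r)r^k)$, runs the uniform-perfectness iteration around each anchor to manufacture children $m^{i,p}_{k+1}$, and then \emph{assigns} those children non-greedily to a prescribed list $\mathcal{N}^i_{k+1}$ of nearby parents, one pair per element of $P^i_{k+1}$. This is where the bound $\alpha_1 r^k$ in \eqref{T2} (not $C_*r^k$) is used, and it is why \eqref{T5} has content. Your phrase ``swapping the $\pi$-assignment as needed'' hints at this, but $\pi$ must be one fixed map: you cannot re-swap per pair $(n_1,n_2)$, and you give no mechanism to make the swaps for all pairs simultaneously consistent. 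The anchor construction in the paper is precisely such a mechanism, and the definition of $\alpha_6$ via $N(N-1)/2$ counts pairs in $\mathcal{N}^i_{k+1}$, not sibling pairs as you suggest.
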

\begin{rem}
	\begin{enumerate}
		\item $N<\infty$ because $(X,d)$ is doubling. 
		\item If $r_0$ satisfies inequalities \eqref{cond1}, \eqref{TC3}, \eqref{TC4} and \eqref{TC5}, then any $r\in(0,r_0]$ satisfies the same inequalities but replacing $r_0$ with $r$.
	\end{enumerate}
\end{rem}
\begin{proof}
	We first divide each $\Omega_{k+1}$ into three parts. We inductively define $f_{k+1}:\Mb{N}\to\Mb{N}\cup\{\infty\}$ by 
	\begin{align*}
	f_{k+1}(1)=&\min\{m\mid
	x_{k+1,m}\not\in\Cup_{\omega\in\Omega_k}B(x_\omega,(\alpha_2+\alpha_6r)r^k)\},\\
	f_{k+1}(i)=&\min\{m\mid
	x_{k+1,m}\not\in\Cup_{\omega\in\Omega_k}B(x_\omega,(\alpha_2+\alpha_6r)r^k) \text{ and} \\
	&\quad d(x_{k+1,m},x_{k+1,f_{k+1}(j)})\ge2\alpha_6r^{k+1}\text{ for any }j<i\text{ with }f_{k+1}(j)<\infty\}
	\end{align*}
 and let 
\begin{align*}
	\Omega^A_{k+1}&=\{\omega\in\Omega_{k+1}\mid x_\omega\in B(x_\lambda,\alpha_2r^k)\text{ for some } \lambda\in\Omega_k\},\\
	\Omega^B_{k+1}&=\{\omega\in\Omega_{k+1}\mid x_\omega\in B(x_{k+1,f_{k+1}(i)},\alpha_6r^{k+1})\text{ for some }i\text{ with }f_{k+1}(i)<\infty\},\\
	\Omega^C_{k+1}&=\Omega_{k+1}\setminus\bigl(\Omega^A_{k+1}\cup\Omega^B_{k+1}\bigr).
\end{align*}
Note that by definition of $f_{k+1},\ \Omega^A_{k+1}\cap\Omega^B_{k+1}=\emptyset.$ Now we define $\pi|_{\Omega_{k+1}}$ on each part.
\begin{itemize}
	\item For any $\omega\in\Omega^A_{k+1},$ there exists unique $\lambda\in\Omega_k$ such that $d(x_\omega,x_\lambda)<\alpha_2r^k$ due to \eqref{c} and $2\alpha_2<c_*$ by \eqref{TC2}. We define $\pi(\omega)=\lambda.$
	\item For any $\omega\in\Omega^C_{k+1},$ let
	\begin{equation}
		\ul{n}_\omega=\min\{n\mid d(x_\omega,x_{k,n})<C_*r^k\} \label{uln}
	\end{equation}
then $\ul{n}_\omega<\infty$ due to \eqref{C}. We set $\pi(\omega)=(k,\ul{n}_\omega).$
\item Let $i\in\Mb{N}$ with $f_{k+1}(i)<\infty,$ then by definition of $f_{k+1},$
\begin{equation*}
	\Omega^{B,i}_{k+1}:=\{\omega\in\Omega_{k+1}\mid x_\omega\in B(x_{k+1,f_{k+1}(i)},\alpha_6r^{k+1})\}
\end{equation*}
is disjoint. Let
\begin{align*}
	\Mc{N}^i_{k+1}&=\{n\in\Mb{N}\mid d(x_{k+1,f_{k+1}(i)},x_{k,n})<(\alpha_1-\alpha_6r)r^k\},\\
	P^i_{k+1}&=\{(p,q)\mid p,q\in\Mc{N}^i_{k+1}\text{ and }p<q\},
\end{align*}
then $\# P^i_{k+1}\le N(N-1)/2.$ If $P^i_{k+1}\ne\emptyset,$ we give an arbitrary order and write $P^i_{k+1}=\{(n^{i,2j-1}_{k+1},n^{i,2j}_{k+1})\mid 1\le j\le \# P^i_{k+1} \}.$ We also write $\beta_j$ a sequence defined by the recurrence relation $\beta_0=0$ and $\beta_j=\gamma\beta_{j-1}+(1+\gamma)(2+\gamma)C_*.$ Note that $\beta_j=C_*(1+\gamma)(2+\gamma)(\gamma^j-1)/(\gamma-1).$ Since $\Omega^A_{k+1}\ne\emptyset$ by \eqref{C} and \eqref{TC3}, $B(x_{k+1,f_{k+1}(i)},\beta_jr^{k+1})\ne X$ for any $j\le N(N-1)/2.$\\
Let $1\le j\le \#P^i_{k+1},$ then there exist $y,z\in X$ and $a,b\in\Mb{N}$ such that
\begin{align*}
	(\beta_{j-1}+(2+\gamma)C_*)r^{k+1}\le & d(y,x_{k+1,f_{k+1}(i)})<\gamma(\beta_{j-1}+(2+\gamma)C_*)r^{k+1},\\
	&d(y,x_{k+1,a})<C_*r^{k+1},\\
	C_*r^{k+1}\le&d(z,x_{k+1,a})<C_*\gamma r^{k+1},\\
	&d(z,x_{k+1,b})<C_*r^{k+1}
\end{align*}
because $(X,d)$ is uniformly perfect and $\eqref{C}$ holds. Note that 
\begin{equation}\label{cupling}
	z\in B(x_{k+1,a},C_*\gamma r^{k+1})\cap B(x_{k+1,b},C_*\gamma r^{k+1}).
\end{equation} We set $m^{i,2j-1}_{k+1}=a$ and $m^{i,2j}_{k+1}=b$.\par
Now we define $\pi$ on $\Omega^{B,i}_{k+1}$ (including cases of $P^i_{k+1}=\emptyset$) by
\begin{equation*}
	\pi(k+1,m)=\begin{cases}
		(k,n^{i,p}_{k+1}) &\text{ if }m=m^{i,p}_{k+1}\text{ for some }p\le2\# P^i_{k+1},\\
		(k,\ul{n}_{k+1,m}) &\text{ otherwise}
	\end{cases}
\end{equation*}
where $\ul{n}_{k+1,m}$ is as in \eqref{uln}. Then $\pi$ is well-defined because
\begin{align}
	&d(x_{k+1,m^{i,2j-1}_{k+1}},x_{k+1,m^{i,2j}_{k+1}})>C_*r^{k+1}-C_*r^{k+1}=0,\notag\\
	&d(x_{k+1,f_{k+1}(i)},x_{k+1,m^{i,2j-1}_{k+1}})\vee d(x_{k+1,f_{k+1}(i)},x_{k+1,m^{i,2j}_{k+1}})\notag\\
	<&(\gamma(\beta_{j-1}+(2+\gamma)C_*)+(2+\gamma)C_*)r^{k+1}=\beta_jr^{k+1},\label{ub1}\\
	&d(x_{k+1,f_{k+1}(i)},x_{k+1,m^{i,2j-1}_{k+1}})\wedge d(x_{k+1,f_{k+1}(i)},x_{k+1,m^{i,2j}_{k+1}})\notag\\
	>&(\beta_{j-1}+(2+\gamma)C_*-(2+\gamma)C_*)r^{k+1}=\beta_{j-1}r^{k+1}\notag
\end{align}
hold for any $1\le j\le\# P^i_{k+1},$ and so $m^{i,p}_{k+1}\ne m^{i,q}_{k+1}$ for any $p,q\le 2\# P^{i}_{k+1}$ with $p\ne q.$ Moreover, \eqref{ub1} also shows $(k+1,m^{i,p}_{k+1})\in\Omega^{B,i}_{k+1}$ for any $p.$
\end{itemize}
We next show $\pi:\Omega\to\Omega$ satisfies conditions \eqref{T1} to \eqref{T5}.
\begin{itemize}
\item[\eqref{T2}] Let $(k+1,m)\in\Omega_{k+1}.$ If $m=m^{i,p}_{k+1}$ for some $i$ and $p,$ then
\begin{align*}
	d(x_{k+1,m},x_{\pi(k+1,m)})&\le d(x_{k+1,m},x_{k+1,f_{k+1}(i)})+d(x_{k+1,f_{k+1}(i)},x_{k,n^{i,p}_{k+1}})\\
	&<(\alpha_6r+(\alpha_1-\alpha_6r))r^k=\alpha_1r^k.
\end{align*}
Otherwise, \eqref{T2} follows from $\alpha_1>C_*.$
\item[\eqref{T3}] It is obvious.
\item[\eqref{T1}]  Let $\omega\in\Omega_k,$ then there exists $\lambda\in\Omega_{k+1}$ with $d(x_\omega,x_\lambda)<C_*r^{k+1}$ by \eqref{C}.\\ Since \eqref{T3} and \eqref{TC3} hold, $\pi(\lambda)=\omega$ and so $\pi(\Omega_{k+1})=\pi(\Omega).$\\
$\sup_{\omega\in\Omega}\#\pi^{-1}(\omega)<\infty$ follows from \eqref{c} and \eqref{T2} because $(X,d)$ is doubling.
\item[\eqref{T4}] Assume $y\in B(x_{k,n_1},\alpha_3r^k)\cap B(x_{k,n_2},\alpha_3r^k)$ for some $y\in X$ with $n_1<n_2.$
\begin{itemize}
	\item  If $y\not\in\Cup_{\omega\in\Omega_k}\ol{B(x_\omega,(\alpha_2+\alpha_6r+C_*r)r^k)},$ set $z=y.$
	\item Otherwise, let $n_3=\min\{n\mid y\in B(x_{k,n},(\alpha_2+\alpha_6r+C_*r)r^k)\}.$ Since $\{x_{k,n_1},x_{k,n_2}\}\not\subset B(x_{k,n_3},(\alpha_2+\alpha_6r+C*r)r^k)$ by $2(\alpha_2+\alpha_6r+C_*r)<c_*,$ there exists $z\in X$ with \[(\alpha_2+\alpha_6r+C_*r)r^k\le d(z,x_{k,n_3})<\gamma(\alpha_2+\alpha_6r+C_*r)r^k.\]
	Then for any $l\ne n_3,$
	\begin{align*}
		d(z,x_{k,l})&\ge d(x_{k,l},x_{k,n_3})-d(x_{k,n_3},z)\\
		&\ge(c_*-\gamma(\alpha_2+\alpha_6r+C_*r))r^k>(\alpha_2+\alpha_6r+C_*r)r^k.
	\end{align*}
\end{itemize}
In both cases, $z$ satisfies $d(x_\omega,z)>(\alpha_2+\alpha_6r+C_*r)r^k$ for any $\omega\in\Omega_k.$ Let $m_0=\min\{m\mid d(x_{k+1,m},z)<C_*r^{k+1}\},$ then
$(\alpha_2+\alpha_6r)r^k<d(x_\omega,x_{k+1,m_0})$ for any $\omega\in\Omega_k,$ so there exists $i\in\Mb{N}$ such that $2\alpha_6r^{k+1}> d(x_{k+1,m_0},x_{k+1,f_{k+1}(i)}).$ Moreover,
\begin{align*}
	&d(x_{k,n_l},x_{k+1,f_{k+1}(i)})\\
	\le&d(x_{k,n_l},y)+d(y,z)+d(z,x_{k+1,m_0})+d(x_{k+1,m_0},x_{k+1,f_{k+1}(i)})\\
	<&(\alpha_3+(1+\gamma)(\alpha_2+\alpha_6r+C_*r)+C_*r+2\alpha_6r)r^k<(\alpha_1-\alpha_6r)r^k
\end{align*} 
for $l=1,2,$ by \eqref{TC5}, therefore $(n_1,n_2)\in P^i_{k+1}.$ By definition of $\pi$ on $\Omega^{B,i}_{k+1},$ there exists $j\le\# P^{i}_{k+1}$ such that $\pi(k+1,m^{i,2j-1}_{k+1})=(k,n_1)$ and $\pi(k+1,m^{i,2j}_{k+1})=(k,n_2).$ Then
\[B(x_{k+1,m^{i,2j-1}_{k+1}},\alpha_3r^{k+1})\cap B(x_{k+1,m^{i,2j}_{k+1}},\alpha_3r^{k+1})\ne\emptyset\] because \eqref{cupling} and $C_*\gamma<\alpha_3$ hold.
\item[\eqref{T5}] If $d(x_{k+1,m_1},x_{\pi(k+1,m_1)})\ge C_*r^k,$ then there exist $i,p$ with $m_1=m^{i,p}_{k+1}$ by definition of $\pi.$ Additionally, if $d(x_{k+1,m_1},x_{k,n})<\alpha_3r^k,$ then \[d(x_{k+1,f_{k+1}(i)},x_{k,n})<(\alpha_3+\alpha_6r)r^k<(\alpha_1-\alpha_6r)r^k.\]
Therefore $(n^{i,p}_{k+1},n)\in P^i_{k+1}$ or $ (n,n^{i,p}_{k+1})\in P^i_{k+1}$ holds if $n\ne n^{i,p}_{k+1},$ and so we obtain desired $m_2$ and $m_3$ by definition of $\pi$ on $\Omega^{B,i}_{k+1},$ in the same way as the proof of \eqref{T4}.
\end{itemize}
\end{proof}
Theorem \ref{Main} immediately follows from Propositions \ref{TtoM} and \ref{Tree}. 
\section{Application for evaluations of the Ahlfors regular conformal dimension}
Here we introduce the framework and results of \cite{kig,sas} to give an application of our main theorem.
\begin{defi}
Let $T$ be a countable set and $\pi:T\to T$ be a map such that the following hold:
\begin{align}
	\bullet\ & \text{let }F_\pi=\{w\mid \pi^n(w)=w\text{ for some }n\ge1\},\text{ then }\# F_\pi\le1. \tag{H1}\label{p1} \\
	\bullet\ & \text{For any }w,v\in T,\text{ there exist }n,m\ge0\text{ such that }\pi^n(w)=\pi^m(v). \tag{H2}\label{p2}
\end{align}
Let $\phi\in F_\pi$ if $F_\pi\ne\emptyset,$ otherwise we fix any $\phi\in T.$ We call the triplet $(T,\pi,\phi)$ a tree with a reference point. 
\end{defi}
We justify this definition as follows.
\begin{lem}\begin{enumerate}
		\item Let $b(w,v)=\min\{n\ge0| \pi^n(w)=\pi^m(v)\text{ for some }m\ge0\}$ for $w,v\in T,$ then $\pi^{b(w,v)}(w)=\pi^{b(v,w)}(v).$
		\item Let $\Mc{A}=\{(w,v)\mid \pi(w)=v \text{ or }\pi(v)=w\}\setminus\{(\phi,\phi)\},$ then $(T,\Mc{A})$ is a tree.
	\end{enumerate}
\end{lem}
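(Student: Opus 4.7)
The plan is to prove parts (1) and (2) in sequence, using (1) to supply the ``meeting point'' that makes (2) go through. For part (1), I would set $n_0 = b(w,v)$ and choose $m_1$ minimal with $\pi^{n_0}(w) = \pi^{m_1}(v)$, and symmetrically define $m_0 = b(v,w)$ with minimal $n_1$ satisfying $\pi^{n_1}(w) = \pi^{m_0}(v)$. The two minimality conditions immediately give $n_0 \le n_1$ and $m_0 \le m_1$. If $m_0 = m_1$ the claim is immediate. Otherwise, applying $\pi^{n_1 - n_0}$ to the first identity and equating with the second forces $\pi^{m_0}(v) = \pi^{m_1 + n_1 - n_0}(v)$, so $\pi^{m_0}(v)$ is a periodic point of $\pi$; by \eqref{p1} this forces $\pi^{m_0}(v) = \phi$ and $\pi(\phi) = \phi$, which propagates to $\pi^{m_1}(v) = \phi$ and yields the desired $\pi^{n_0}(w) = \pi^{m_0}(v)$.

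For connectedness of $(T, \Mc{A})$, I would use part (1) together with \eqref{p2}: given $w, v \in T$, concatenate the $\pi$-orbit of $w$ up to $\pi^{n_0}(w)$ with the reverse $\pi$-orbit of $v$ from $\pi^{m_0}(v)$, where $n_0 = b(w,v)$ and $m_0 = b(v,w)$. The only edges excluded from $\Mc{A}$ are self-loops at $\phi$, and the minimality of $n_0, m_0$ guarantees that the walk visits $\phi$ at most at the single turning point, so no forbidden edge is traversed.

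The main obstacle is acyclicity. My plan is to introduce a height function $h:T\to\Mb{Z}$ defined by $h(w) := b(w,\phi) - b(\phi,w)$, and then show two things: (a) $h$ is well-defined, which follows from part (1) applied to the pair $(w,\phi)$ together with the fact that in the absence of further periodic points the index pair realising $\pi^n(w) = \pi^m(\phi)$ is unique up to a common shift; and (b) for every $w$ with $(w,\pi(w)) \in \Mc{A}$ one has $h(\pi(w)) = h(w) - 1$. Establishing (b) is the delicate step: it requires a minimality argument for $b(\pi(w),\phi)$ and $b(\phi,\pi(w))$ that splits into the cases $F_\pi = \emptyset$ and $F_\pi = \{\phi\}$, since in the latter $h$ collapses to the hitting time of $\phi$ and the identity fails precisely at the excluded self-loop.

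Granted (b), every edge of $\Mc{A}$ changes height by $\pm 1$, and acyclicity follows quickly: in a putative simple cycle $w_0,w_1,\dots,w_k = w_0$, any vertex $w_i$ of maximum height would have both cycle-neighbours $w_{i\pm 1}$ at height $h(w_i)-1$, so both must equal $\pi(w_i)$, contradicting the distinctness of vertices in a simple cycle. Combined with connectedness this is what it means for $(T,\Mc{A})$ to be a tree.
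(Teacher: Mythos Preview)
Your argument for part (1) is correct and essentially identical to the paper's: both proofs derive a periodic point from the assumption $\pi^{b(w,v)}(w)\ne\pi^{b(v,w)}(v)$ and invoke \eqref{p1}. The paper phrases it as a direct contradiction (two distinct periodic points), while you push one step further to identify the periodic point with $\phi$; either way the mechanism is the same.

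For part (2) your route genuinely differs from the paper's. The paper argues \emph{uniqueness of simple paths} directly: given a simple path $(w_i)_{i=0}^n$, the simplicity forces a single ``turning index'' $i_*$ with $\pi(w_i)=w_{i+1}$ for $i<i_*$ and $w_i=\pi(w_{i+1})$ for $i\ge i_*$ (because two consecutive ``up'' then ``down'' steps would force $w_{i-1}=w_{i+1}$); then $i_*=b(w,v)$ and $n-i_*=b(v,w)$ are forced by part (1), so the path is determined. You instead introduce the height $h(w)=b(w,\phi)-b(\phi,w)$, show edges change $h$ by $\pm1$, and kill cycles via the maximum-height vertex having both neighbours equal to $\pi(w_i)$. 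Your approach is sound; the step you flag as delicate, $h(\pi(w))=h(w)-1$, does go through by the case split you describe (when $F_\pi=\{\phi\}$ one has $b(\phi,\cdot)\equiv0$ and $h$ is the hitting time of $\phi$; when $F_\pi=\emptyset$ one checks $b(\pi(w),\phi)=b(w,\phi)-1$ if $b(w,\phi)\ge1$ and $b(\phi,\pi(w))=b(\phi,w)+1$ if $b(w,\phi)=0$, using injectivity of $\pi$ along orbits). One small remark: your point (a) about $h$ being ``well-defined'' is a non-issue, since $h$ is a difference of two finite minima; there is nothing to verify there.

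The trade-off: the paper's argument is shorter and self-contained within the lemma, while yours front-loads the verification of the height identity but has the advantage that $h$ is exactly the function $[w]$ the paper defines immediately afterwards and uses throughout Section~3, so your proof integrates that structure from the start.
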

\begin{rem}
	Since \eqref{p2} holds, $b(w,v)<\infty$ for any $w,v\in T.$
\end{rem}
\begin{proof}
	\begin{enumerate}
		\item Assume $\pi^{b(w,v)}(w)\ne\pi^{b(v,w)}(v),$ then $\pi^{b(w,v)}(w)=\pi^{m_1}(v)$ and $\pi^{m_2}(w)=\pi^{b(v,w)}(v)$ for some $m_1>b(v,w)$ and $m_2>b(w,v).$ This shows $\pi^{b(w,v)}(w)=\pi^{m_1}(v)=\pi^{(m_1-b(v,w))+(m_2-b(w,v))}(\pi^{b(w,v)}(w))$and $\pi^{b(v,w)}(v)$\\$= \pi^{(m_2-b(w,v))+(m_1-b(v,w))}(\pi^{b(v,w)}(v)),$ which contradict \eqref{p1}.
		\item \eqref{p2} assures that $(T,\Mc{A})$ is connected. Let $(w_i)_{i=0}^n$ be a simple path from $w$ to $v,$ that is, $w_0=w,$ $w_n=v,$ $(w_{i-1},w_i)\in \Mc{A}$ for any $1\le i\le n$ and $w_i\ne w_j$ for any $i\ne j.$ 
		Since $(w_i)_{i=0}^n$ is simple, there exist $0\le i_*\le n$ such that $\pi(w_i)=w_{i+1}$ for any $i<i_*$ and $w_i=\pi(w_{i+1})$ for any $i\ge i_*.$ This shows $\pi^{i_*}(w)=\pi^{n-i_*}(v),$ and it also follows that  $w_{b(w,v)}=w_{n-b(v,w)}$ because of (1). Therefore $i_*=b(w,v)$ and $n=b(w,v)+b(v,w)$ by simplicity, so the simple path from $w$ to $v$ is unique.
	\end{enumerate}
\end{proof}
\begin{rem}
	If $F_\pi\ne\emptyset,$ $(T,\Mc{A},\phi)$ coincides with ``tree with a reference point'' in the sense of \cite[Definition 2.1.2]{kig}.
\end{rem}
Throughout this section, $\Mc{T}=(T,\pi,\phi)$ is a tree with a reference point. We also let $[w]=b(w,\phi)-b(\phi,w)$ for $w\in T$ and $(T)_k=\{w\in T\mid [w]=k\}$ for $k\in\Mb{Z}.$
\begin{defi}[Partition]
Let $(Y,\rho)$ be a ($\sigma$-compact) metric space without isolated points, and $\Mc{C}(Y,\rho)$ be a set of all nonempty compact subsets of $(Y,\rho)$ except single points. We say $K:T\to \Mc{C}(Y,\rho)$ is a partition of $Y$ (parametrized by $\Mc{T},$) if the following conditions hold.
\begin{align}
	\bullet\ &\Cup_{w\in(T)_0}K(w)=Y\text{ and for any }w\in T,\ \Cup_{v\in\pi^{-1}(w)}K(v)=K(w). \label{P1}\tag{P1}\\
	\bullet\ &\text{For any sequence }(w_k)_{k\in\Mb{Z}}\text{ with }\pi(w_{k+1})=w_k \text{ for any }k\in\Mb{Z},\notag\\
	& \cap_{k\in\Mb{Z}}K(w_k)\text{ is a single point.} \label{P2}\tag{P2}
\end{align}
\end{defi} 
Hereafter, we write $K_w$ instead of $K(w)$ for simplicity. In \cite[Definition 2.2.1]{kig}, the following condition is also included in the definition of partition.
\begin{equation}
 \bullet\ \text{For any }w\in T,\ K_w\text{ has no isolated points.} \tag{P*}\label{Pstar}
\end{equation}
\begin{lem}
	\eqref{Pstar} follows for any partition $K$.
\end{lem}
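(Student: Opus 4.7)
The plan is a proof by contradiction. Assume some $K_w$ has an isolated point $x$, so there is $\epsilon>0$ with $B(x,\epsilon)\cap K_w=\{x\}$. My strategy is to build a bi-infinite descending sequence through $w$ that also passes through $x$, invoke \eqref{P2} to force the intersection of the $K_{w_k}$ to equal $\{x\}$, and then use a routine compactness argument to extract a point of $K_w$ distinct from $x$ but within $\epsilon/2$ of it.

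I would first construct $(w_k)_{k\in\Mb{Z}}\subset T$ with $\pi(w_{k+1})=w_k$ for every $k$, $w_0=w$, and $x\in K_{w_k}$ for every $k$. For $k\le 0$, set $w_k=\pi^{-k}(w)$; since \eqref{P1} implies $K_v\subseteq K_{\pi(v)}$, it follows that $x\in K_w\subseteq K_{w_{-1}}\subseteq K_{w_{-2}}\subseteq\cdots$. For $k\ge 0$, I define $w_{k+1}$ inductively using \eqref{P1}: from $K_{w_k}=\Cup_{v\in\pi^{-1}(w_k)}K_v$, the preimage $\pi^{-1}(w_k)$ is nonempty (otherwise $K_{w_k}=\emptyset$, contradicting $K_{w_k}\in\Mc{C}(Y,\rho)$) and $x\in K_v$ for some $v$ in this preimage; take that $v$ as $w_{k+1}$.

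By \eqref{P2}, $\Cap_{k\in\Mb{Z}}K_{w_k}$ is a single point, which must be $\{x\}$ since $x$ lies in every $K_{w_k}$. Because $K_{w_k}\supseteq K_{w_0}$ for $k\le 0$, those terms are redundant in the intersection, so $\Cap_{k\ge 0}K_{w_k}=\{x\}$ as well. Now $(K_{w_k})_{k\ge 0}$ is a decreasing family of compact subsets of the Hausdorff space $(Y,\rho)$ with intersection contained in the open set $U=B(x,\epsilon/2)$; the compact sets $K_{w_k}\setminus U$ are decreasing with empty total intersection, so by the finite intersection property $K_{w_{k_*}}\setminus U=\emptyset$ for some $k_*$, that is, $K_{w_{k_*}}\subseteq B(x,\epsilon/2)$.

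Finally, since $K_{w_{k_*}}\in\Mc{C}(Y,\rho)$ is not a single point, there exists $y\in K_{w_{k_*}}\setminus\{x\}$. Iterating $K_v\subseteq K_{\pi(v)}$ yields $K_{w_{k_*}}\subseteq K_w$, so $y\in K_w\setminus\{x\}$ while $d(x,y)<\epsilon/2<\epsilon$, contradicting $B(x,\epsilon)\cap K_w=\{x\}$. The main conceptual step is constructing the bi-infinite sequence through $x$ so that \eqref{P2} can be applied; once the intersection is pinned to $\{x\}$, the rest is standard compactness.
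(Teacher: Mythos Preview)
Your proof is correct and follows the same strategy as the paper: build a bi-infinite chain $(w_k)$ through $w$ with $x\in K_{w_k}$ for all $k$, then reach a contradiction via compactness and the fact that each $K_{w_k}$ is not a singleton. The only cosmetic difference is in the compactness step: the paper observes that $(K_{w_k}\setminus\{x\})_{k\ge0}$ is itself a nested family of nonempty compact sets (since $x$, being isolated in $K_w$, is isolated in each $K_{w_k}\subset K_w$), so its intersection is nonempty and hence $\bigcap_k K_{w_k}\supsetneq\{x\}$, directly contradicting \eqref{P2}; you instead invoke \eqref{P2} first and then shrink some $K_{w_{k_*}}$ into $B(x,\epsilon/2)$.
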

\begin{proof}
	Assume $x\in K_w$ be an isolated point of $K_w$ for some $w\in T.$ By \eqref{P1}, we can find $(w_k)_{k\in\Mb{Z}}$ with $\pi(w_{k+1})=w_k$ and $x\in K_{w_k}$ for any $k\in\Mb{Z},$ and $w_0=w.$ Since $x$ is an isolated point of $K_{w_k}\subset K_{w_0}$ for any $k\ge0$ and $K_{w_k}$ is not a single point, $(K_{w_kl}\setminus\{x\})_{k\ge0}$ is a decreasing sequence of nonempty compact sets. Therefore $\Cap_{k\in\Mb{Z}}K_{w_k}\setminus\{x\}=\Cap_{k\ge0}(K_{w_k}\setminus\{x\})\ne\emptyset,$ this contradicts \eqref{P2}.
\end{proof}
\begin{defi}
	For $w\in T$ and $s\in(0,\infty),\ x,y\in X$ and $M\ge1,$ we define
\begin{align*}
	g_\rho(w)&=\diam (K_w,\rho), \\
	\Lambda^\rho_s&=\begin{cases}
		\emptyset& \text{if }F_\pi\ne\emptyset\text{ and }s>g_\rho(\phi)\\
		\{w\in T\mid g_\delta(w)\le s<g_\rho(\pi(w))\} & \text{otherwise,}
	\end{cases}\\
E^\rho_s&=\{(w,v)\in\Lambda^\rho_s\times\Lambda^\rho_s\mid w\ne v\text{ and }K_w\cap K_v\ne\emptyset\},\\
\shortintertext{$l^\rho_s(\cdot,\cdot)$ is the graph distance of $(\Lambda^\rho_s,E^\rho_s)$ and}
\delta^\rho_M(x,y)&=\inf\{s>0\mid\text{there exist }w,v\in \Lambda^\rho_s\text{ such that }\\
&\hspace{120pt} x\in K_w,\ y\in K_v\text{ and }l^\rho_s(w,v)\le M\}.
\end{align*}
\end{defi}
\begin{defi}[Basic framework]
	Assume $\sup_{w\in T}\#\pi^{-1}(w)<\infty$ and let $K$ be a partition of $(Y,\rho)$ such that for any $w\in T,$ $K_w\setminus \Cup_{v\in (T)_{[w]}:v\ne w}K_v\ne\emptyset,$ and there exists an open set $U_w\supset K_w$ with $\#\{v\in (T)_{[w]}\mid U_w\cap K_v\ne\emptyset \}<\infty.$ We say $\rho$ satisfies basic framework with respect to $K$ if the following conditions hold.
	\begin{align}
	\bullet\ &\text{(Adapted).}\  
	\text{There exists }M\ge1,\ \eta_1>0 \text{ such that}\notag\\
	&\hspace{75pt}\eta_1^{-1}\delta^\rho_M(x,y)\le\rho(x,y)\le\eta_1\delta^\rho_M(x,y)\text{ for any }x,y\in Y. \tag{B1}\label{B1}\\
	\bullet\ &\text{(Thick).}\ \text{There exists }\eta_2>0\text{ such that for any }w\in T,\notag\\
	&\hspace{60pt}\{y\mid \delta^\rho_1(x_w,y)\le\eta_2g_\rho(\pi(w))\}\subset K_w\text{ with some }x_w\in K_w. \tag{B2}\label{B2}\\
	\bullet\ &\text{(Uniformly finite).}\ sup_{s\in(0,\infty),\ w\in\Lambda^\rho_s}\#\{v\mid v\in\Lambda^\rho_s,\ l^\rho_s(w,v)\le 1\}<\infty. \tag{B3}\label{B3}\\
	\bullet \ &\text{There exist }\eta_3>0\text{ and }r\in(0,1)\text{ such that }\notag\\ 
	&\hspace{110pt}\eta_3^{-1}r^{[w]}\le g_\rho(w)\le \eta_3r^{[w]}\text{ for any }w\in T.\tag{B4}\label{B4}
	\end{align}
\end{defi}
The main result of \cite{kig} is the following.
\begin{thm}[\cite{kig},Theorem 4.6.4, \cite{sas}, Theorem 3.9]\label{thar}
	Let $K$ be a partition of $(Y,\rho),\ E_k=\{(w,v)\in (T)_k\times (T)_k\mid w\ne v\text{ and }K_w\cap K_v\ne\emptyset \},\ l_k$ be the graph distance of $((T)_k,E_k)$ and 
\begin{multline*}
	\Mc{E}_{p,k,w,M}=\inf\{\frac{1}{2}\sum_{(u,v)\in E_{[w]+k}}|f(u)-f(v)|^p\mid
	f:(T)_{[w]+k}\to\Mb{R},\ f(u)=1\\
	\text{ if }\pi^k(u)=w\text{ and }f(u)=0\text{ if }l_{[w]}(w,\pi^k(u))>m\}.
\end{multline*}
	If $\rho$ satisfies basic framework with respect to $K,$ then
	\[\ard(Y,\rho)=\inf\{p\mid\limsup_{k\to\infty}\sup_{w\in T}\Mc{E}_{p,k,w,M}=0\},\]
	where $M$ is the constant appearing in \eqref{B1}.
\end{thm}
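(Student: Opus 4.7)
My plan is to split the equality into two inequalities and argue each separately.

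For the bound $\ard(Y,\rho) \ge \inf\{p\mid \limsup_k \sup_w \Mc{E}_{p,k,w,M} = 0\}$, I would fix any $\alpha > \ard(Y,\rho)$ together with a quasisymmetric $\alpha$-Ahlfors regular metric $\delta$ on $Y$, and show that $\Mc{E}_{\alpha,k,w,M} \to 0$ uniformly in $w$. The quantity $\Mc{E}_{p,k,w,M}$ is the discrete $p$-capacity between the cube $K_w$ and the complement of its graph $M$-neighborhood at scale $[w]+k$. The key identification is that, for partitions satisfying basic framework, these discrete capacities are comparable to a continuous $p$-modulus of curve families separating the corresponding subsets, via the adapted condition \eqref{B1} and the scale comparability \eqref{B4}. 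Since quasisymmetries control modulus up to multiplicative constants, and the $\alpha$-modulus at small scales vanishes in an $\alpha$-Ahlfors regular space, the graph energy at $p=\alpha$ must tend to zero, proving this inequality.

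For the reverse bound $\ard(Y,\rho) \le \inf\{p\mid \limsup_k \sup_w \Mc{E}_{p,k,w,M} = 0\}$, assume the energies vanish uniformly for some $p$, and the aim is to construct a quasisymmetric $p$-Ahlfors regular metric $\delta$ on $Y$. The plan is to use minimizers of $\Mc{E}_{p,k,w,M}$ to assign conformal weights to the vertices in $(T)_k$ at each scale, and then define $\delta(x,y)$ as the infimum of total weighted lengths over chains in $((T)_k, E_k)$ joining the cubes containing $x$ and $y$, optimized over scales $k$. That $\delta$ satisfies the triangle inequality uses the uniform finiteness \eqref{B3}; that $\delta$ is quasisymmetric to $\rho$ uses the adaptedness \eqref{B1}; and that $\delta$ is $p$-Ahlfors regular follows from the thickness assumption \eqref{B2}, which guarantees each $K_w$ contains a ball of comparable size and so $\delta$-balls have well-controlled cube content, combined with the approximate multiplicativity of the weights along chains.

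The main obstacle is this second direction: producing a genuine Ahlfors regular metric, rather than merely a quasi-metric or a weight, requires careful uniformization, and the constants arising from \eqref{B3} must be absorbed without blowing up along long chains. A secondary subtlety, needed for the $\sigma$-compact extension in \cite{sas}, is that the reference point $\phi$ may be arbitrary and the tree $\Mc{T}$ carries no canonical root weighting, so the construction has to be carried out in a way that is insensitive to the choice of $\phi$ while still yielding regularity estimates uniform in $w\in T$.
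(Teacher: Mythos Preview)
The paper does not prove this theorem at all: it is quoted verbatim as \cite[Theorem 4.6.4]{kig} and \cite[Theorem 3.9]{sas}, and is used in the paper only as a black box to motivate the need for partitions satisfying the basic framework. There is therefore nothing in the paper to compare your proposal against.

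As for the sketch itself, the two-inequality outline is in the right spirit, but several of the bridges you invoke are not available as stated. In the first direction, you claim that the discrete capacities $\Mc{E}_{p,k,w,M}$ are comparable, via \eqref{B1} and \eqref{B4}, to a continuous $p$-modulus of curve families, and then that quasisymmetries control this modulus. Neither step is true in this generality: the space $(Y,\rho)$ need not carry any rectifiable curves, and quasisymmetric invariance of classical $p$-modulus requires Loewner-type hypotheses that are absent here. Kigami's actual argument stays entirely on the combinatorial side, working with weight functions on the tree and a notion of $p$-spectral dimension, not with curve modulus. In the second direction, your plan to define $\delta$ as an infimum of weighted chain lengths is closer to the truth, but the hard part---which your sketch glosses over---is not the triangle inequality or quasisymmetry but showing that the resulting $\delta$ is genuinely $p$-Ahlfors regular. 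This requires a submultiplicative/supermultiplicative structure on the energies (what Kigami calls being ``exponential''), and the passage from $\limsup_k\sup_w\Mc{E}_{p,k,w,M}=0$ to the existence of such a weight with the correct volume growth occupies much of \cite[Chapter 4]{kig}. Your sketch does not identify this mechanism.
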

We can adapt our main theorem to this framework. Recall that $(X,d)$ is a complete, doubling, uniformly perfect metric space.
\begin{prop}\label{part}
	There exist a tree with a reference point $(T,\pi,\phi)$ and a partition $K$ of $(X,d)$ such that $d$ satisfies the basic framework with respect to $K.$
\end{prop}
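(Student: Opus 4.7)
The plan is to extract the tree and the partition from the system of dyadic cubes delivered by Theorem \ref{Main}. With cubes $Q_\omega$ in hand, the tree will be $T=\Omega$ with $\pi$ as in Proposition \ref{Tree}, the partition will be $K_\omega=\overline{Q_\omega}$, and the geometric properties \eqref{q1}--\eqref{q5} translate almost mechanically into the tree and partition axioms and into \eqref{B1}--\eqref{B4}. Condition \eqref{q5} is precisely what is needed to secure \eqref{B1}; the other basic framework conditions were already accessible from \eqref{q1}--\eqref{q4}.

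Concretely, I would fix admissible $c_*,C_*$, pick $r\in(0,r_0]$ with $r_0$ from Theorem \ref{Main}, use Lemma \ref{net} to produce centers $\{x_\omega\}_{\omega\in\Omega_k}$ satisfying \eqref{c}--\eqref{C}, and let $\{Q_\omega\}_{\omega\in\Omega}$ be the cubes supplied by Theorem \ref{Main}. Set $T=\Omega$, let $\pi$ be the parent map from Proposition \ref{Tree}, choose a reference point $\phi$ at a sufficiently coarse level, and put $K_\omega=\overline{Q_\omega}$. Condition \eqref{p1} is automatic because $\pi$ strictly decreases levels, so $F_\pi\subseteq\{\phi\}$. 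Condition \eqref{P1} follows at the top level from \eqref{q1} together with local finiteness of $\{Q_\omega\}_{\omega\in\Omega_k}$ (a consequence of \eqref{q4} and doubling, which gives $\bigcup_\omega\overline{Q_\omega}=\overline{X}=X$), and inductively from the identity $Q_w=\bigcup_{v\in\pi^{-1}(w)}Q_v$ already established inside the proof of Proposition \ref{TtoM}. Condition \eqref{P2} follows from $\diam K_{w_k}\le 2C_2r^{[w_k]}\to 0$ as $k\to\infty$, completeness of $(X,d)$, and compactness of each $K_{w_k}$ (bounded closed sets in a complete doubling space are compact).

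For the basic framework, \eqref{B4} is immediate from \eqref{q4} and uniform perfectness (the latter supplying the matching lower bound on $g_d(w)$). For \eqref{B3}, if $K_w\cap K_v\ne\emptyset$ with $w,v\in\Lambda^d_s$ then $d(x_w,x_v)\le 4C_2 s$, and by doubling only finitely many such centers exist. For \eqref{B2}, take $x_w=x_\omega$ and choose $\eta_2$ small enough that any chain of at most two cubes at scale $\eta_2 g_d(\pi(w))$ has total diameter $\le C_1r^{[w]}$; then $\delta^d_1(x_\omega,y)\le\eta_2 g_d(\pi(w))$ forces $d(x_\omega,y)\le C_1r^{[w]}$ and hence $y\in Q_\omega\subset K_w$ by \eqref{q4}. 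The heart of the proof is \eqref{B1}, verified with $M=2$: given $x,y\in X$, pick $k$ with $C_3r^{k+1}<d(x,y)\le C_3r^k$ and apply \eqref{q5} to produce $\omega_0,\omega_1,\omega_2\in\Omega_k$ with $x\in K_{\omega_0}$, $y\in K_{\omega_2}$ and $K_{\omega_i}\cap K_{\omega_{i+1}}\ne\emptyset$. Each $\omega_i$ lies in $\Lambda^d_s$ for $s$ just above $2C_2r^k$, so $\delta^d_2(x,y)\lesssim r^k\lesssim d(x,y)$. The reverse bound $d(x,y)\le 6C_2\delta^d_2(x,y)$ follows by summing diameters along any chain realizing the infimum.

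The main obstacle I anticipate is \eqref{p2} in the unbounded case: the parent map $\pi$ from Proposition \ref{Tree} does not automatically merge spatially distant branches of $X$ at any finite coarse scale. In the bounded case \eqref{p2} is immediate because $\Omega_k$ becomes a singleton once $c_*r^k>\diam(X,d)$. For unbounded $X$ one performs coarse-scale surgery: pick $k_0$ sufficiently negative and collapse $\Omega_k$ for $k\le k_0$ onto a single linear chain terminating at $\phi$, simultaneously redefining the cubes at those levels as appropriate unions so that compatibility of $\pi$ with cube inclusion (and hence the partition axioms) survives. Since \eqref{B1}--\eqref{B4} are purely fine-scale statements, this coarse-scale surgery does not affect them.
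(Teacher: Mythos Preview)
Your overall plan—set $K_\omega=\overline{Q_\omega}$ with the cubes of Theorem \ref{Main} and read off \eqref{P1}, \eqref{P2}, \eqref{B1}--\eqref{B4} from \eqref{q1}--\eqref{q5}—is exactly the paper's approach, and your verifications of the partition axioms and of \eqref{B1}--\eqref{B4} are essentially correct (in places more explicit than the paper, which outsources half of \eqref{B1}, \eqref{B2}, and the non-triviality condition $K_w\setminus\bigcup_{v\ne w}K_v\ne\emptyset$ to results in \cite{kig} and \cite{sas}; note that you should also record this last condition, which comes from \eqref{q2}).

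The real gap is exactly where you anticipate it: \eqref{p2} in the unbounded case. Your proposed ``coarse-scale surgery'' cannot work. Collapsing all of $\Omega_k$ for $k\le k_0$ onto a single chain forces $K_\phi$ to be $X$ (or at least an unbounded union of cubes), but the definition of a partition requires each $K_w$ to be \emph{compact}. Moreover, \eqref{B4} is not a purely fine-scale statement—it is required for \emph{every} $w\in T$—so altering the coarse levels does spoil it.

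The paper's fix is both simpler and cleaner: before invoking Theorem \ref{Main}, fix a point $x_*\in X$ and use Lemma \ref{net} so that $x_*\in\{x_\omega\}_{\omega\in\Omega_k}$ for \emph{every} $k$. Write $\omega_k$ for the index with $x_{\omega_k}=x_*$. Since $x_*\in Q_{\omega_k}$ and same-level cubes are disjoint, one gets $\pi(\omega_k)=\omega_{k-1}$. For an arbitrary $\omega\in\Omega_l$, \eqref{q4} gives
\[
Q_\omega\subset B\bigl(x_*,\,d(x_*,x_\omega)+C_2r^l\bigr)\subset B\bigl(x_*,C_1r^{\,l-j}\bigr)\subset Q_{\omega_{l-j}}
\]
once $j$ is large enough, hence $\pi^j(\omega)=\omega_{l-j}$. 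This yields \eqref{p2} with no surgery, every $K_w$ remains a genuine compact cube, and \eqref{B4} holds on all of $T$. In the bounded case the paper truncates $T$ at $k_0=\max\{k:\#\Omega_k=1\}$ and declares that vertex the fixed point of $\pi$, avoiding the infinite tail of identical cubes that your choice $T=\Omega$ would produce (which would again violate \eqref{B4}).
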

\begin{proof}
	Fix any $x_*\in X.$ Then by Lemma \ref{net}, we can take $\Cup_{k\in\Mb{Z}} \{x_\omega\}_{\omega\in\Omega_k}$ with \eqref{c}, \eqref{C} and $x_*\in\{x_\omega\}_{\omega\in\Omega_k}$ for any $k\in\Mb{Z}.$ Let $Q_\omega\subset X\ (\omega\in\Omega)$ be sets given by Theorem \ref{Main}. We also let $k_0=\max\{k\in\Mb{Z}\mid \#\Omega_k=1\}$ with $\max\emptyset=-\infty$ and $T=\cup_{k\ge k_0}\Omega_k.$ If $k_0>-\infty$ and $\omega\in\Omega_{k_0},$ we define $\pi(\omega)=\omega.$ Otherwise, let $\pi(\omega)$ be the unique vertex in $\Omega_{k-1}$ such that $Q_\omega\subset Q_{\pi(\omega)}:$ existence and uniqueness follow from \eqref{q1} and \eqref{q3}. Then $\pi$ satisfies the hypothesis \eqref{p1} and \eqref{p2} by \eqref{q4} and $x_*\in\cap_{k\in\Mb{Z}}\{x_\omega\mid\omega\in\Omega_k\}.$\par
	Let $K_\omega=\ol{Q_\omega}.$ Since every bounded sets on a doubling metric space is totally bounded, \eqref{q1}, \eqref{q3} and \eqref{q4} with the uniformly perfect condition of $(X,d)$ show that $K$ is a partition of $(X,d).$ \par
	\eqref{B4} also follows from these conditions. \eqref{B3}, $\sup_{w\in T}\#\pi^{-1}(w)<\infty,$ and $\#\{v\in (T)_{[w]}\mid U_w\cap K_v\ne\emptyset \}<\infty$ for some $U_w\supset K_w$ follow from \eqref{q4} and the doubling condition. By \eqref{q5}, $\delta^d_2(x,y)\le \eta_1d(x,y)$ for some $\eta_1>0,$ which imply \eqref{B1} because of \cite[Theorem 2.4.5]{kig}. \eqref{B1}, \eqref{B3} and \eqref{q3} also imply \eqref{B2}. Finally, $K_w\setminus \Cup_{v\in (T)_{[w]}:v\ne w}K_v\ne\emptyset$ follows from \eqref{q2} because of \cite[Proposition2.2.3]{kig} and \cite[Lemma 3.10]{sas}.
\end{proof}
\begin{thm}
	Let $(Y,\rho)$ be a complete metric space without isolated points. Then the following conditions are equivalent.
	\begin{enumerate}
		\item $(Y,\rho)$ is doubling and uniformly perfect.
		\item $\ard(Y,\rho)<\infty.$
		\item There exist a tree with a reference point and a partition $K$ of $(Y,\rho)$ such that $\rho$ satisfies the basic framework with respect to $K.$
	\end{enumerate}
\end{thm}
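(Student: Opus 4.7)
The plan is to establish the three-way equivalence by proving the two halves $(2) \Rightarrow (1) \Rightarrow (3)$ and $(3) \Rightarrow (1) \Rightarrow (2)$, which together yield $(1) \Leftrightarrow (2)$ and $(1) \Leftrightarrow (3)$.

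For $(2) \Rightarrow (1)$: from $\ard(Y,\rho) < \infty$, fix an Ahlfors regular metric $\delta$ on $Y$ that is quasisymmetric to $\rho$. Since $(Y,\rho)$ has no isolated points, neither does $(Y,\delta)$, so the remark following the Ahlfors regular definition gives that $(Y,\delta)$ is uniformly perfect; direct volume comparison on nested $\delta$-balls gives the doubling property. Both properties are quasisymmetric invariants (see~\cite{Hei}), and hence transfer to $(Y,\rho)$.

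For $(1) \Rightarrow (3)$: this is Proposition~\ref{part} applied to $(Y,\rho)$. For the converse $(3) \Rightarrow (1)$, I would verify doubling and uniform perfectness directly from the basic framework. Doubling: cover $B(x, 2r)$ by cells $K_w$ at the level $k$ with $r^k \sim r$ (by \eqref{B4}) that intersect the ball; the count is uniformly bounded by combining \eqref{B3} with \eqref{B1} (turning $\rho$-closeness into graph-closeness), and each such $K_w$ sits in a $\rho$-ball of radius $\sim r$. Uniform perfectness: given $x$ and $r$ with $B(x,r) \ne Y$, pick $k$ with $r^k \sim r$ and a cell $K_w$ containing $x$ at a refined level where multiple cells exist, so that a neighboring cell $K_v$ supplies, via \eqref{B2} and \eqref{B4}, a point of $Y$ at $\rho$-distance between $r$ and $\gamma r$ from $x$.

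The main obstacle is $(1) \Rightarrow (2)$: from $(Y,\rho)$ complete, doubling, and uniformly perfect, one must conclude $\ard(Y,\rho) < \infty$. This is classical (see~\cite{Hei} or Mackay--Tyson), but the proof most in the spirit of this paper uses Theorem~\ref{Main}: one defines a new distance $\delta$ on $Y$ in which each level-$k$ cube $Q_\omega$ has $\delta$-diameter $r^{\alpha k}$ for a suitable exponent $\alpha > 0$, together with a Borel measure assigning each level-$k$ cube mass proportional to $r^{\alpha k}$; the resulting $(Y, \delta)$ is $\alpha$-Ahlfors regular, and the chain condition \eqref{q5} of Theorem~\ref{Main} is essential to produce quasisymmetry of $\delta$ with $\rho$, by ruling out the detour phenomenon illustrated in Figure~\ref{apart}. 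Alternatively, invoke Theorem~\ref{thar} and attempt to exhibit some finite $p$ realizing $\limsup_{k\to\infty} \sup_{w\in T} \Mc{E}_{p,k,w,M} = 0$ via a $p$-capacity estimate on the bounded-degree branching graphs $((T)_k, E_k)$, taking $p$ above the combinatorial threshold $\log N_0 / \log(1/r)$ with $N_0 = \sup_w \#\pi^{-1}(w) < \infty$; the technical crux there is to reconcile the branching count $N_0$ with the graph degree bound from \eqref{B3} to produce the correct exponent.
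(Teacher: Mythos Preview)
Your argument is correct and essentially matches the paper's: both cite \cite{Hei} for $(1)\Leftrightarrow(2)$ (the paper specifically invokes Theorem~13.3 and Corollary~14.15), use Proposition~\ref{part} for $(1)\Rightarrow(3)$, and verify $(3)\Rightarrow(1)$ by a cell-counting argument for doubling together with a cell-based argument for uniform perfectness (the paper cites \cite[Lemma~3.6.4]{kig} for the latter in place of your neighboring-cell sketch, and first reduces via \cite[Proposition~4.3.1]{kig} so that $\Lambda^\rho_{\eta_3 r^k}=(T)_k$).

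One caveat on your optional ``in-spirit'' alternative (a) for $(1)\Rightarrow(2)$, which the paper does not attempt: taking both the $\delta$-diameter and the mass of each level-$k$ cube equal to $r^{\alpha k}$ yields Ahlfors dimension $1$, not $\alpha$; the standard construction instead assigns mass $\sim r^{\alpha k}$ to cubes of $\delta$-diameter $\sim r^k$ with $\alpha=\log N_0/\log(1/r)$, or equivalently rescales diameters to match the branching. Since your primary line already defers to the classical result in \cite{Hei}, this does not affect the validity of your proof.
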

\begin{proof}
	$((1)\Leftrightarrow(2))$ This follows from \cite[Theorem 13.3 and Corollary 14.15]{Hei}.\par
	$((1)\Rightarrow(3))$ It is shown in Proposition \ref{part}.\par
	$((3)\Rightarrow(1))$ By \cite[Proposition 4.3.1]{kig}, we may assume $r<\eta_3^{-2}$ for constants in the basic framework, and so we may take $\Lambda^\rho_{\eta_3r^k}=(T)_k$ without loss of generality. If $A\subset B(x,2r)$ satisfies $d(y,z)\ge r$ for any $y,z\in A$ with $w\ne z,$ \eqref{B1} and \eqref{B4} imply that there exists $k\in\Mb{N}$ independent of $x$ and $r,$ and there exist $\omega$ and $\lambda_y\in Y(y\in A)$ such that $\lambda_y\in (T)_{[\omega]+k},\ l_{[\omega]}(\omega,\pi^k(\lambda_y))\le M$ and $\lambda_y\ne\lambda_z$ if $y\ne z.$ Therefore $\#A\le(\sup_{\omega\in T} \#\pi^{-1}(\omega))^k (\sup_{s,\omega}\#\{v\mid v\in\Lambda^\rho_s,\ l^\rho_s(w,v)\le 1\})^M$ and so $(Y,\rho)$ is doubling. On the other hand, \cite[Lemma 3.6.4]{kig} shows $(Y,\rho)$ is uniformly perfect. 
\end{proof}
\section*{Acknowledgments}
This work was supported by JSPS KAKENHI Grant Number JP20J23120. 

\end{document}